\documentclass[pdflatex,sn-mathphys-num]{sn-jnl}

\usepackage{graphicx}
\usepackage{multirow}
\usepackage{amsmath,amssymb,amsfonts}
\usepackage{amsthm}
\usepackage{mathrsfs}
\usepackage[title]{appendix}
\usepackage{xcolor}
\usepackage{textcomp}
\usepackage{manyfoot}
\usepackage{booktabs}
\usepackage{algorithm}
\usepackage{algorithmicx}
\usepackage{algpseudocode}
\usepackage{listings}
\usepackage{mathtools}
\usepackage{enumitem}
\usepackage{tikz}
\usetikzlibrary{positioning,arrows.meta}
\usepackage{hyperref}
\usepackage{comment}

\theoremstyle{thmstyleone}
\newtheorem{theorem}{Theorem}
\newtheorem{lemma}[theorem]{Lemma}
\newtheorem{proposition}[theorem]{Proposition}

\theoremstyle{thmstyletwo}

\newtheorem{remark}{Remark}

\theoremstyle{thmstylethree}
\newtheorem{definition}{Definition}

\newcommand{\R}{\mathbb{R}}

\newcommand{\partialT}{\partial_T}
\newcommand{\partialC}{\partial^C}
\newcommand{\Normal}[2]{N_{#1}(#2)}
\newcommand{\Tangent}[2]{T_{#1}(#2)}

\newcommand{\cX}{\mathcal{X}}

\newcommand{\dom}{\operatorname{dom}}

\begin{document}

\title[Necessary Optimality Conditions for Bilevel Programming using Tangential Subdifferentials]{Necessary Optimality Conditions for Bilevel Programming using Tangential Subdifferentials}

\author*[1]{\fnm{Huilin} \sur{Luo}}
\email{luohuilin@hnu.edu.cn}

\affil*[1]{\orgdiv{School of Mathematics},
\orgname{Hunan University},
\orgaddress{
\city{Changsha},
\postcode{410082},
\state{Hunan},
\country{China}}}

\abstract{The bilevel program is an optimization problem in which the constraint involves solutions to a parametric optimization problem. It is well known that the value function reformulation provides an equivalent single-level optimization problem, but it results in a nonsmooth optimization problem that never satisfies the usual constraint qualification, such as the Mangasarian--Fromovitz constraint qualification (MFCQ).
In this paper, we study necessary optimality conditions for nonsmooth bilevel programming using tangential subdifferentials. We derive a sequential enhanced tangential Fritz--John type condition for a class of nonsmooth nonlinear programs and then obtain corresponding  enhanced tangential Fritz--John and enhanced tangential Karush--Kuhn--Tucker (KKT) systems by taking limits of the sequential condition. We further investigate tangential constraint qualification conditions, including T-quasinormality and a tangential cone--continuity property (T--CCP), and analyze how they guarantee the validity of tangential KKT type conditions. Finally, these results are applied to the value-function reformulation of a bilevel programming problem.}

\keywords{Bilevel programming, tangential subdifferentials, optimality conditions, constraint qualifications, quasinormality, cone--continuity property}

\maketitle

\section{Introduction}

Bilevel programming is a hierarchical optimization framework arising in economics, engineering, transportation, and management science \cite{Dempe2002}. A standard bilevel problem can be written as
\begin{equation}\label{BP}
\begin{aligned}
\min_{x,y} \quad & F(x,y)\\
\text{s.t.} \quad & y \in S(x), \quad G(x,y)\le 0,
\end{aligned}
\end{equation}
where $G:\mathbb{R}^{n}\times\mathbb{R}^{m}\to\mathbb{R}^{p}$ and $S(x)$ denotes the solution set of the lower-level problem
\begin{equation}\label{LLP}
\begin{aligned}
\min_{y} \quad & f(x,y)\\
\text{s.t.} \quad & g(x,y)\le 0,
\end{aligned}
\end{equation}
with $g:\mathbb{R}^{n}\times\mathbb{R}^{m}\to\mathbb{R}^{q}$. The feasible set of the upper-level problem is thus defined implicitly through a parametric optimization problem, which leads to nonsmooth and implicitly constrained reformulations.

A fundamental difficulty in bilevel programming is that classical constraint qualifications, such as LICQ, Slater's condition, and the Mangasarian--Fromovitz constraint qualification, often fail or become too restrictive in single-level reformulations based on the lower-level value function \cite{DempeZemkoho2014,YeZhu1995,YeZhu2010}. In particular, Ye and Zhu \cite{YeZhu1995} established necessary optimality conditions for bilevel programs via the value function approach, and extended their results in \cite{YeZhu2010} to broader nonsmooth settings. Dempe and Zemkoho \cite{DempeZemkoho2014} analyzed KKT reformulations and showed that the resulting mathematical programs with equilibrium constraints may fail to be equivalent to the original bilevel model under local optimality.

These issues are closely related to the nonsmooth structure of value-function reformulations. The constraints typically involve composite terms such as $f(x,y)-V(x)$, where the value function $V(x)$ is generally nonsmooth and may fail to be locally Lipschitz. Classical generalized derivatives, including Clarke subdifferentials \cite{Clarke1990} and Mordukhovich limiting subdifferentials \cite{Mordukhovich2006}, rely on regularity assumptions that are not satisfied in many bilevel settings. As a consequence, classical KKT conditions are not directly applicable, and weak optimality frameworks are required.

In constrained optimization, several weak constraint qualifications have been proposed to address situations where classical conditions fail. Andreani et al.~\cite{AndreaniHaeserMartinezRamosSilva2016} introduced the cone--continuity property (CCP) and showed that it ensures the convergence of approximate KKT (AKKT) sequences to exact KKT points. This provides a way to recover KKT conditions through limiting arguments even when classical assumptions are not satisfied.
Andreani et al.~\cite{AndreaniMartinezSchuverdt2005} studied constraint qualifications based on linear dependence, leading to conditions weaker than the Mangasarian--Fromovitz constraint qualification. Later, Andreani et al.~\cite{AndreaniFazzioSchuverdtSecchin2019} developed quasinormality as a condition that excludes abnormal multipliers via sequences approaching infeasible points.This allows one to obtain KKT-type conditions when classical constraint qualifications fail. CCP ensures convergence from approximate to exact KKT points, while quasinormality excludes abnormal multipliers.

In this paper, we adopt the framework of tangential convexity and tangential subdifferentials. The concept originates from the work of Pshenichnyi \cite{Pshenichnyi1971}, where directional convexity was introduced to study nonsmooth optimization problems. Later developments by Martinez-Legaz and collaborators \cite{MartinezLegaz2015} and by Jennane, Kalmoun, and El Fadil \cite{JennaneKalmounElFadil2021} established systematic properties of tangential convexity and subdifferentials. More recently, Tung \cite{Tung2018,Tung2020,Tung2025Survey} developed a unified framework and applied tangential subdifferentials to derive optimality conditions and duality results in nonsmooth optimization.

Tangential convexity covers a broad class of functions, including convex functions on open domains, G\^ateaux differentiable functions, Clarke-regular locally Lipschitz functions, and Michel--Penot regular functions \cite{Tung2025Survey,MartinezLegaz2015,JennaneKalmounElFadil2021}. In these cases, the tangential subdifferential reduces to the corresponding classical subdifferentials, such as the Fr\'echet, Clarke, or Michel--Penot subdifferential \cite{Tung2025Survey,JennaneKalmounElFadil2021}.Compared with classical generalized derivatives, the tangential subdifferential does not require local Lipschitz continuity or regularity assumptions, relying only on directional derivatives and a convexity condition with respect to directions \cite{Tung2025Survey}. 

This issue is particularly relevant in bilevel programming, since the value function of the lower-level problem is generally nonsmooth and may fail to be locally Lipschitz \cite{Mordukhovich2006,DempeZemkoho2014}. As a result, classical subdifferential constructions, such as Clarke or limiting subdifferentials, may not be applicable due to their reliance on Lipschitz or regularity assumptions. 

By contrast, the tangential subdifferential is defined via directional derivatives and does not require such assumptions, which makes it applicable in this setting \cite{Tung2025Survey}.

This paper studies necessary optimality conditions for bilevel programming under tangential convexity assumptions. The approach combines tangential subdifferential techniques with weak constraint qualifications. The main contributions are:
\begin{enumerate}[label=(\roman*)]
\item a sequential enhanced Fritz--John type condition is derived in the tangential framework;

\item enhanced Fritz--John and Karush--Kuhn--Tucker (KKT) systems are obtained under semicontinuity assumptions;

\item tangential constraint qualifications, including T-quasinormality and T--CCP, are introduced to ensure enhanced T-KKT conditions;

\item T--CCP is shown to imply a tangential Abadie constraint qualification (TACQ);

\item a chain rule for tangential subdifferentials is established;

\item the results are applied to the value-function reformulation of bilevel programming.
\end{enumerate}

The remainder of the paper is organized as follows. Section~2 recalls preliminaries on tangential convexity and subdifferentials. Section~3 develops optimality conditions in the tangential framework. Section~4 studies tangential constraint qualifications and their relationships. Section~5 applies these results to the bilevel model.
\section{Preliminaries }

\noindent
This section recalls the basic notions used in the sequel, including tangential convexity , tangential subdifferentials and one tangential chain rule that will be used later.
Throughout this paper, let $h:\R^n\to \R\cup\{+\infty\}$ and let $\bar x\in \dom h$.We also use the notation $u^+:=\max\{0,u\}$ for $u\in\R$.

\begin{definition}[Directional derivative {\cite{RockafellarWets1998}}]
The directional derivative of $h$ at $\bar x$ in the direction $v\in\R^n$ is defined by
$h'(\bar x;v):=\lim_{t\downarrow 0}\frac{h(\bar x+tv)-h(\bar x)}{t},$
whenever the limit exists.
\end{definition}

\begin{definition}[Tangential convexity {\cite{Pshenichnyi1971}}]
The function $h$ is said to be \emph{tangentially convex} at $\bar x$ if $h'(\bar x;v)$ exists and is finite for every $v\in\R^n$, and the mapping
$v\mapsto h'(\bar x;v)$
is convex.
\end{definition}

Many familiar classes of functions are tangentially convex. Typical examples include sums and nonnegative scalar multiples of tangentially convex functions, products of two nonnegative tangentially convex functions, G\^ateaux differentiable functions, convex functions with open domains, Clarke regular functions, semismooth and semiconvex functions, as well as squared distance and squared oriented distance functions; see \cite{MashkoorzadehMovahedianNobakhtian2022,SisaratWangkeeree2020,Mifflin1977,DelfourZolesio2011}.

\begin{definition}[Tangential subdifferential {\cite{Pshenichnyi1971}}]
The tangential subdifferential of $h$ at $\bar x$ is defined by
\[
\partial_T h(\bar x):=\{\xi\in\R^n:\ \langle \xi,v\rangle\le h'(\bar x;v),\ \forall v\in\R^n\}.
\]
\end{definition}

If $h$ is tangentially convex at $\bar x$, then $\partialT h(\bar x)$ is nonempty, convex and compact, and
\begin{equation}\label{eq:support-function}
h'(\bar x;v)=\max_{\xi\in \partialT h(\bar x)}\langle \xi,v\rangle,
\qquad \forall v\in \R^n,
\end{equation}
see, e.g., \cite{JennaneKalmounElFadil2021,Tung2025Survey}.

\begin{definition}[Tangent and normal cones {\cite{Mordukhovich2006,RockafellarWets1998}}]
Let $C\subset \R^n$ be nonempty and let $\bar x\in C$. The contingent tangent cone to $C$ at $\bar x$ is
\[
\Tangent{C}{\bar x}:=\bigl\{d\in \R^n:\ \exists t_k\downarrow 0,\ d_k\to d\text{ with }\bar x+t_k d_k\in C\bigr\}.
\]
The limiting normal cone to $C$ at $\bar x$ is defined by
\[
\Normal{C}{\bar x}:=\bigl\{\xi\in\R^n:\ \exists x^k\to \bar x,\ \xi^k\to \xi,\ \xi^k\in \widehat N_C(x^k)\bigr\},
\]
where $\widehat N_C(x)$ denotes the Fr\'echet normal cone to $C$ at $x$.
\end{definition}

\begin{proposition}[A tangential chain rule]\label{prop:chain-rule}
Let $g:\R^n\to\R$ be tangentially convex and directionally differentiable at $\bar x$, and let $\varphi:\R\to\R$ be continuously differentiable in a neighborhood of $g(\bar x)$. Then
\[
(\varphi\circ g)'(\bar x;v)=\varphi'(g(\bar x))\,g'(\bar x;v),\qquad \forall v\in\R^n.
\]
If, in addition, $\varphi'(g(\bar x))\ge 0$, then
\[
\partialT(\varphi\circ g)(\bar x)\subset \varphi'(g(\bar x))\,\partialT g(\bar x).
\]
\end{proposition}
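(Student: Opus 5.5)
Both claims follow from a first-order expansion of $\varphi$ around $g(\bar x)$ combined with the existence of the directional derivative of $g$; no Lipschitz property of $g$ is needed.

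\textbf{Derivative formula.} Fix $v\in\R^n$ and, for $t>0$ small, set $a:=g(\bar x)$ and $b_t:=g(\bar x+tv)$. I would first check that $b_t\to a$ as $t\downarrow 0$. This holds because
\[
b_t-a=t\cdot\frac{g(\bar x+tv)-g(\bar x)}{t},
\]
the quotient converges to the finite number $g'(\bar x;v)$, and $t\to 0$. So $g$ is continuous along rays at $\bar x$, and for $t$ small $b_t$ lies in the neighborhood where $\varphi$ is $C^1$. By the mean value theorem there is $\theta_t$ between $a$ and $b_t$ with $\varphi(b_t)-\varphi(a)=\varphi'(\theta_t)(b_t-a)$. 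Dividing by $t$ gives
\[
\frac{(\varphi\circ g)(\bar x+tv)-(\varphi\circ g)(\bar x)}{t}=\varphi'(\theta_t)\,\frac{g(\bar x+tv)-g(\bar x)}{t}.
\]
Since $\theta_t\to a$ and $\varphi'$ is continuous near $a$, we get $\varphi'(\theta_t)\to\varphi'(g(\bar x))$. The second factor tends to $g'(\bar x;v)$, so the product of the limits gives the formula. This step is only a careful use of continuity of $\varphi'$ near $g(\bar x)$, and it is where the $C^1$ hypothesis is used. Note also that, as a consequence, $\varphi\circ g$ is tangentially convex whenever $\varphi'(g(\bar x))\ge 0$.

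\textbf{Subdifferential inclusion.} Let $\lambda:=\varphi'(g(\bar x))\ge 0$ and take $\xi\in\partialT(\varphi\circ g)(\bar x)$. By the first part, $\langle\xi,v\rangle\le\lambda g'(\bar x;v)$ for all $v$. I would split into two cases.

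\emph{Case $\lambda>0$.} Then $\xi/\lambda$ satisfies $\langle\xi/\lambda,v\rangle\le g'(\bar x;v)$ for all $v$. Hence $\xi/\lambda\in\partialT g(\bar x)$, and so $\xi\in\lambda\,\partialT g(\bar x)$.

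\emph{Case $\lambda=0$.} Then $\langle\xi,v\rangle\le 0$ for all $v$, and replacing $v$ by $-v$ forces $\xi=0$. We still need $0\in 0\cdot\partialT g(\bar x)$, which requires $\partialT g(\bar x)\neq\emptyset$. This follows from tangential convexity of $g$ by the nonemptiness result recalled before \eqref{eq:support-function}. This case is the only subtle point and the reason the tangential convexity hypothesis on $g$ is needed.

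Finally, the inclusion is in fact an equality when $\lambda\ge 0$, but only the inclusion is required.
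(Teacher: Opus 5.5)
Your proof is correct and follows the same route as the paper. The paper also applies the scalar chain rule for directional derivatives and then reads $\langle\xi,v\rangle\le\varphi'(g(\bar x))\,g'(\bar x;v)$ as membership in the subdifferential of a nonnegatively scaled sublinear function. You add the details the paper leaves implicit: the mean-value justification of the chain rule, and the case $\varphi'(g(\bar x))=0$, where nonemptiness of $\partialT g(\bar x)$ is needed.
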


\begin{proof}
The directional derivative identity follows from the classical scalar chain rule for directional derivatives. If $\xi\in \partialT(\varphi\circ g)(\bar x)$, then
\[
\langle \xi,v\rangle\le (\varphi\circ g)'(\bar x;v)=\varphi'(g(\bar x))\,g'(\bar x;v),\qquad \forall v\in\R^n.
\]
When $\varphi'(g(\bar x))\ge 0$, this implies that $\xi$ belongs to the tangential subdifferential of the positively scaled sublinear function $v\mapsto \varphi'(g(\bar x))g'(\bar x;v)$, hence
\[
\partialT(\varphi\circ g)(\bar x)\subset \varphi'(g(\bar x))\,\partialT g(\bar x).
\]
\end{proof}

\begin{remark}
Proposition~\ref{prop:chain-rule} is the basic tangential calculus rule used throughout the paper. In Section~3 it will be applied to the penalty terms involving $(g_j^+)^2$.
\end{remark}

\section{Enhanced Optimality Conditions with Tangential Subdifferentials}

\noindent
In this section, we derive sequential and static optimality systems for a general nonsmooth constrained optimization problem by using tangential subdifferentials. We first establish several technical lemmas, then prove a sequential enhanced tangential Fritz--John type condition, and finally derive static enhanced tangential Fritz--John and Karush--Kuhn--Tucker systems.

Consider the general constrained problem
\begin{equation}\label{prob:P}
\min_{x\in\cX} f(x)
\quad \text{s.t.}\quad g_j(x)\le 0,\qquad j=1,\dots,q,
\end{equation}
where $\cX\subset \R^n$ is closed, and
\[
C:=\{x\in \cX:\ g_j(x)\le 0,\ j=1,\dots,q\}
\]
denotes the feasible set. Throughout this section, unless otherwise stated, we assume that $f,g_j:\R^n\to\R$ are locally Lipschitz and tangentially convex around the point under consideration.

\subsection{Preliminary Results on Tangential Subdifferentials}

\begin{lemma}\label{lem:usc-tangential}
Let $h:\R^n\to\R$ be locally Lipschitz around $\bar x$. If $h$ is Clarke regular at $\bar x$, then the multifunction $x\mapsto \partialT h(x)$ is outer semicontinuous at $\bar x$.
\end{lemma}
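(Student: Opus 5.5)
The plan is to compare the tangential subdifferential with the Clarke subdifferential. On the Clarke side, local Lipschitz continuity gives the classical upper semicontinuity of the Clarke generalized directional derivative, and Clarke regularity at $\bar x$ makes the two notions agree at the base point.

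Fix $L$ and a neighborhood $U$ of $\bar x$ on which $h$ is $L$-Lipschitz. Take sequences $x^k\to\bar x$ and $\xi^k\in\partialT h(x^k)$ with $\xi^k\to\xi$. We must show $\xi\in\partialT h(\bar x)$, i.e.\ $\langle\xi,v\rangle\le h'(\bar x;v)$ for every $v\in\R^n$.

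\textbf{Step 1: bound at the points $x^k$.} For every $v$ and every $k$ large enough that $x^k\in U$,
\[
\langle\xi^k,v\rangle\le h'(x^k;v)\le h^\circ(x^k;v).
\]
Here $h^\circ(x;v)=\limsup_{y\to x,\,t\downarrow0}\bigl(h(y+tv)-h(y)\bigr)/t$ is the Clarke generalized directional derivative. The second inequality holds because the difference quotients defining $h'(x^k;v)$ are special cases of those in the $\limsup$ (take $y=x^k$). This step contains the one point needing care. At $x^k$ the directional derivative may fail to exist, since tangential convexity is only assumed near the point in the paper's standing assumptions. If so, I would read the defining inequality of $\partialT h(x^k)$ with the upper Dini derivative $\limsup_{t\downarrow0}$ in place of $h'(x^k;v)$, or simply over the directions where the limit exists. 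In either reading the bound by $h^\circ(x^k;v)$ still holds, because the upper Dini derivative is also dominated by $h^\circ$. I expect this bookkeeping to be the only delicate step.

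\textbf{Step 2: pass to the limit.} Two classical facts for locally Lipschitz functions are used: $h^\circ$ is finite (bounded by $L\|v\|$) and upper semicontinuous in $(x,v)$. Letting $k\to\infty$ in Step 1 then gives
\[
\langle\xi,v\rangle=\lim_k\langle\xi^k,v\rangle\le\limsup_k h^\circ(x^k;v)\le h^\circ(\bar x;v).
\]

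\textbf{Step 3: use regularity at $\bar x$.} Clarke regularity at $\bar x$ means precisely that $h'(\bar x;v)$ exists and equals $h^\circ(\bar x;v)$ for all $v$. Hence $\langle\xi,v\rangle\le h'(\bar x;v)$ for all $v$, which says $\xi\in\partialT h(\bar x)$, and outer semicontinuity follows.

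As a side remark, Step 1 also shows $\|\xi^k\|\le L$ for large $k$. So $\partialT h$ is locally bounded near $\bar x$, and outer semicontinuity is therefore equivalent to upper semicontinuity there. This is the form needed when taking limits of sequential conditions later in the paper.
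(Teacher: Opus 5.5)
Your proof is correct and follows essentially the same route as the paper: the chain $\langle\xi^k,v\rangle\le h'(x^k;v)\le h^\circ(x^k;v)$, a limiting step, and then Clarke regularity to identify $h^\circ(\bar x;\cdot)$ with $h'(\bar x;\cdot)$. The paper phrases the limiting step as outer semicontinuity of $x\mapsto\partial^C h(x)$, using $\partialT h(x)\subset\partial^C h(x)$ and $\partialT h(\bar x)=\partial^C h(\bar x)$; you instead use directly the upper semicontinuity of $h^\circ(\cdot;v)$ that underlies that fact, and you also handle the possible nonexistence of $h'(x^k;v)$, which the paper passes over.
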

\begin{proof}
For every $x$, one has
\[
\partialT h(x)\subset \partialC h(x),
\]
because
\[
\langle \xi,v\rangle \le h'(x;v)\le h^\circ(x;v),\qquad \forall v\in\R^n,
\]
implies $\xi\in\partialC h(x)$. Since the Clarke subdifferential mapping $x\mapsto \partialC h(x)$ is outer semicontinuous with compact values, whenever $x^k\to \bar x$, $\xi^k\in\partialT h(x^k)$, and $\xi^k\to \xi$, we obtain $\xi\in \partialC h(\bar x)$. If $h$ is Clarke regular at $\bar x$, then
\[
h'(\bar x;\cdot)=h^\circ(\bar x;\cdot),
\]
hence
\[
\partialT h(\bar x)=\partialC h(\bar x).
\]
Therefore $\xi\in\partialT h(\bar x)$.
\end{proof}

\begin{lemma}\label{lem:subdiff-positive-part}
Let $g:\R^n\to\R$ be locally Lipschitz and tangentially convex around $\bar x$. Then
\[
\partialT(g^+)(\bar x)
\subset
\bigcup_{\lambda\in[0,1]}\lambda\,\partialT g(\bar x),
\]
where $g^+(x):=\max\{0,g(x)\}$.
In particular:
\begin{enumerate}[label=(\roman*)]
\item if $g(\bar x)<0$, then $\partialT(g^+)(\bar x)=\{0\}$;
\item if $g(\bar x)>0$, then $\partialT(g^+)(\bar x)=\partialT g(\bar x)$;
\item if $g(\bar x)=0$, then
\[
\partialT(g^+)(\bar x)
\subset
\{\lambda\xi:\ \lambda\in[0,1],\ \xi\in\partialT g(\bar x)\}.
\]
\end{enumerate}
\end{lemma}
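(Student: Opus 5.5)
The argument computes the directional derivative of $g^+$ at $\bar x$ in each of the three sign cases and then reads off the tangential subdifferential from its definition. Fix $v\in\R^n$. By tangential convexity, $g'(\bar x;v)$ exists and is finite, and $g$ is continuous at $\bar x$ since it is locally Lipschitz.

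\emph{Case (i), $g(\bar x)<0$.} By continuity, $g<0$ on a neighborhood of $\bar x$. Hence $g^+\equiv 0$ there, so $(g^+)'(\bar x;v)=0$ for all $v$. The definition of $\partialT$ then gives $\partialT(g^+)(\bar x)=\{\xi:\langle\xi,v\rangle\le 0\ \forall v\}=\{0\}$, which is $0\cdot\partialT g(\bar x)$. This set is contained in the union because $\partialT g(\bar x)\neq\emptyset$ (a consequence of tangential convexity).

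\emph{Case (ii), $g(\bar x)>0$.} Here $g^+=g$ near $\bar x$. So $(g^+)'(\bar x;\cdot)=g'(\bar x;\cdot)$, and the two tangential subdifferentials coincide. This gives the inclusion with $\lambda=1$.

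\emph{Case (iii), $g(\bar x)=0$.} This is the main step. Since $g^+(\bar x)=0$ and $u\mapsto u^+$ is continuous and positively homogeneous, we have
\[
\frac{g^+(\bar x+tv)-g^+(\bar x)}{t}=\Bigl(\frac{g(\bar x+tv)-g(\bar x)}{t}\Bigr)^+\to \bigl(g'(\bar x;v)\bigr)^+ .
\]
Hence $(g^+)'(\bar x;v)=\max\{0,g'(\bar x;v)\}$. Using \eqref{eq:support-function}, this is the support function of
\[
K:=\operatorname{conv}\bigl(\{0\}\cup\partialT g(\bar x)\bigr).
\]
Indeed, the support function of a convex hull of a union is the maximum of the support functions, and $0$ has support function identically $0$.

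Now let $\xi\in\partialT(g^+)(\bar x)$. Then $\langle\xi,v\rangle\le\sigma_K(v)$ for all $v$. Since $K$ is compact and convex, the separation theorem gives $\xi\in K$. Finally, because $\partialT g(\bar x)$ is convex, every element of $K$ has the form $(1-\lambda)\cdot 0+\lambda\eta=\lambda\eta$ with $\lambda\in[0,1]$ and $\eta\in\partialT g(\bar x)$.

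The only delicate points are the identification of the support function of $K$ and the use of convexity of $\partialT g(\bar x)$ to collapse the convex hull into the union $\bigcup_{\lambda\in[0,1]}\lambda\,\partialT g(\bar x)$. Both rest on the nonemptiness, convexity and compactness of $\partialT g(\bar x)$ guaranteed by tangential convexity. Combining the three cases yields the general inclusion.
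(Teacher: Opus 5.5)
Your proof is correct and follows the same idea as the paper's: compute $(g^+)'(\bar x;\cdot)$ and read off $\partialT(g^+)(\bar x)$ from $\max\{0,g'(\bar x;\cdot)\}$ by convex analysis, which you make explicit through the support function of $\operatorname{conv}(\{0\}\cup\partialT g(\bar x))$. Your preliminary case split is more careful than the paper, which asserts $(g^+)'(\bar x;v)=\max\{0,g'(\bar x;v)\}$ at every $\bar x$ even though this holds only when $g(\bar x)=0$; otherwise only ``$\le$'' holds, which still suffices for the general inclusion, so your local computations in cases (i) and (ii) are what actually justify the equalities stated there.
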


\begin{proof}
Since $g^+(x)=\max\{0,g(x)\}$, the directional derivative satisfies
\[
(g^+)'(\bar x;v)=\max\{0,g'(\bar x;v)\},\qquad \forall v\in\R^n.
\]
If $\xi\in \partialT(g^+)(\bar x)$, then
\[
\langle \xi,v\rangle \le (g^+)'(\bar x;v)=\max\{0,g'(\bar x;v)\},\qquad \forall v\in\R^n.
\]
The stated inclusion follows from convex subdifferential calculus for the positive part of a sublinear function. The three particular cases are immediate.
\end{proof}

\begin{lemma}\label{lem:square-positive-part}
Let $g:\R^n\to\R$ be locally Lipschitz and tangentially convex around $\bar x$, and let
\[
h(x):=\frac{\rho}{2}\bigl(g^+(x)\bigr)^2,\qquad \rho>0.
\]
Then
\[
\partialT h(\bar x)\subset \rho\,g^+(\bar x)\,\partialT(g^+)(\bar x).
\]
Consequently,
\[
\partialT h(\bar x)
\subset
\rho\,g^+(\bar x)\bigcup_{\lambda\in[0,1]}\lambda\,\partialT g(\bar x).
\]
\end{lemma}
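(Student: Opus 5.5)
The plan is to compute the directional derivative of $h$ at $\bar x$ from the one of $g^+$, and then compare support functions. The proof of Lemma~\ref{lem:subdiff-positive-part} already gives that $g^+$ is directionally differentiable at $\bar x$, with $(g^+)'(\bar x;v)=\max\{0,g'(\bar x;v)\}$; this is finite and sublinear because $g'(\bar x;\cdot)$ is. Set $\varphi(s):=\frac{\rho}{2}s^2$, which is $C^1$ on $\R$ with $\varphi'(s)=\rho s$. Then $h=\varphi\circ g^+$, and the first part of Proposition~\ref{prop:chain-rule}, applied with $g^+$ in place of $g$, gives
\[
h'(\bar x;v)=\rho\,g^+(\bar x)\,(g^+)'(\bar x;v),\qquad \forall v\in\R^n .
\]
I would check this identity directly rather than rely on tangential convexity of $g^+$ as a hypothesis. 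Write $\frac{h(\bar x+tv)-h(\bar x)}{t}=\frac{\rho}{2}\bigl(g^+(\bar x+tv)+g^+(\bar x)\bigr)\frac{g^+(\bar x+tv)-g^+(\bar x)}{t}$. The first factor tends to $\rho g^+(\bar x)$ by continuity of $g^+$, and the second tends to $(g^+)'(\bar x;v)$.

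Since $\varphi'(g^+(\bar x))=\rho g^+(\bar x)\ge 0$, the second part of Proposition~\ref{prop:chain-rule} yields $\partialT h(\bar x)\subset \rho g^+(\bar x)\,\partialT(g^+)(\bar x)$. The degenerate case is the one step needing care. If $g^+(\bar x)=0$, then $h'(\bar x;\cdot)\equiv 0$, so $\partialT h(\bar x)=\{0\}$, and the right-hand side is also $\{0\}$ because $\partialT(g^+)(\bar x)$ is nonempty. Nonemptiness holds since $(g^+)'(\bar x;\cdot)$ is a finite sublinear function, so its subdifferential at $0$ is nonempty by~\eqref{eq:support-function}.

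If $g^+(\bar x)>0$, take $\xi\in\partialT h(\bar x)$ and set $\eta:=\xi/(\rho g^+(\bar x))$. Dividing $\langle\xi,v\rangle\le h'(\bar x;v)$ by $\rho g^+(\bar x)>0$ gives $\langle\eta,v\rangle\le (g^+)'(\bar x;v)$ for all $v$, so $\eta\in\partialT(g^+)(\bar x)$, which is the first inclusion. The consequence then follows by substituting the inclusion of Lemma~\ref{lem:subdiff-positive-part}, $\partialT(g^+)(\bar x)\subset\bigcup_{\lambda\in[0,1]}\lambda\,\partialT g(\bar x)$. In this case $g(\bar x)>0$, so case (ii) of that lemma even gives $\partialT(g^+)(\bar x)=\partialT g(\bar x)$ with $\lambda=1$.

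The main obstacle is thus only the justification that $g^+$ has finite directional derivatives, so that the chain rule applies. This comes from tangential convexity of $g$ together with the max formula for directional derivatives of $\max\{0,g\}$, where one splits according to the sign of $g(\bar x)$. When $g(\bar x)=0$, that formula reads $(g^+)'(\bar x;v)=\lim_{t\downarrow0}\max\{0,g(\bar x+tv)/t\}=\max\{0,g'(\bar x;v)\}$ by continuity of $\max$.
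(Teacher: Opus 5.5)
Your proof is correct. It differs from the paper's only in how you split $h$ for the chain rule. The paper takes $\varphi(t)=\frac{\rho}{2}(t^+)^2$, which is $C^1$ on $\R$ with $\varphi'(t)=\rho t^+\ge 0$, and composes it with $g$ itself. Proposition~\ref{prop:chain-rule} then applies directly to the tangentially convex $g$, and no information about $(g^+)'$ is needed. Read literally, this gives $h'(\bar x;v)=\rho g^+(\bar x)\,g'(\bar x;v)$ and hence $\partialT h(\bar x)\subset\rho g^+(\bar x)\,\partialT g(\bar x)$, i.e.\ the consequence with $\lambda=1$ outright. The paper's display in terms of $(g^+)'$ agrees with this, because $(g^+)'(\bar x;\cdot)=g'(\bar x;\cdot)$ when $g(\bar x)>0$ and both expressions vanish when $g(\bar x)\le 0$.

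You instead put $g^+$ inside $\varphi(s)=\frac{\rho}{2}s^2$. This obliges you first to show that $g^+$ is directionally differentiable at $\bar x$ with a finite sublinear derivative, and then to pass through Lemma~\ref{lem:subdiff-positive-part}. In return, you get the first inclusion in exactly the stated form. You also handle the degenerate case $g^+(\bar x)=0$ explicitly, including the nonemptiness of $\partialT(g^+)(\bar x)$ needed there, which the paper's proof of Proposition~\ref{prop:chain-rule} passes over.

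One small correction: the formula $(g^+)'(\bar x;v)=\max\{0,g'(\bar x;v)\}$ that you quote from the proof of Lemma~\ref{lem:subdiff-positive-part} holds only when $g(\bar x)=0$. For $g(\bar x)>0$ one has $(g^+)'(\bar x;\cdot)=g'(\bar x;\cdot)$, which is what makes case (ii) of that lemma true. For $g(\bar x)<0$ one has $(g^+)'(\bar x;\cdot)\equiv 0$. Your argument only uses finiteness and sublinearity of $(g^+)'(\bar x;\cdot)$, which hold in all three cases, so nothing breaks.
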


\begin{proof}
Apply Proposition~\ref{prop:chain-rule} to $\varphi(t)=\frac{\rho}{2}(t^+)^2$ and the function $g$. Since $\varphi'(t)=\rho t^+$, we obtain
\[
h'(\bar x;v)=\rho g^+(\bar x)(g^+)'(\bar x;v),
\]
and therefore
\[
\partialT h(\bar x)\subset \rho\,g^+(\bar x)\,\partialT(g^+)(\bar x).
\]
The second inclusion follows from Lemma~\ref{lem:subdiff-positive-part}.
\end{proof}

\begin{lemma}\label{lem:tangential-local-opt}
Let $f:\R^n\to\R$ be locally Lipschitz and tangentially convex at $\bar x\in C$, where $C\subset \R^n$ is closed. Assume that $\Tangent{C}{\bar x}$ is convex. If $\bar x$ is a local minimizer of
\[
\min_{x\in C} f(x),
\]
then
\[
0\in \partialT f(\bar x)+\Normal{C}{\bar x}.
\]
\end{lemma}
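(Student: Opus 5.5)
The argument has two stages: a primal first-order condition on the tangent cone, followed by a convex separation (conic duality) argument that turns it into the stated dual inclusion.

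\emph{Primal step.} Let $d\in\Tangent{C}{\bar x}$. Choose $t_k\downarrow 0$ and $d_k\to d$ with $\bar x+t_kd_k\in C$. Local optimality gives $f(\bar x+t_kd_k)\ge f(\bar x)$ for all large $k$. Let $L$ be the local Lipschitz constant of $f$. Then
\[
\frac{f(\bar x+t_kd)-f(\bar x)}{t_k}\ge -L\|d_k-d\|,
\]
and letting $k\to\infty$, the existence of $f'(\bar x;d)$ gives $f'(\bar x;d)\ge 0$. Hence $0$ minimizes the function $\sigma(v):=f'(\bar x;v)$ over $K:=\Tangent{C}{\bar x}$. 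By tangential convexity, $\sigma$ is finite, convex and positively homogeneous, hence continuous. The set $K$ is a closed cone, and it is convex by hypothesis.

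\emph{Dual step.} Since $\sigma$ is finite convex and $K$ is closed convex, the convex optimality condition with the sum rule (no qualification is needed because $\sigma$ is continuous everywhere) gives
\[
0\in\partial\sigma(0)+N^{\rm conv}_K(0)=\partialT f(\bar x)+K^\circ .
\]
Here we use $\partial\sigma(0)=\partialT f(\bar x)$, which is exactly the support-function identity \eqref{eq:support-function}. The inclusion can also be seen directly by separation: if $(\partialT f(\bar x)+K^\circ)\not\ni 0$, then, since the set is closed (a compact set plus a closed cone), separation yields $v$ with $\langle\xi+\eta,v\rangle<0$ for all $\xi\in\partialT f(\bar x)$ and $\eta\in K^\circ$. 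Because $K^\circ$ is a cone, this forces $v\in K^{\circ\circ}=K$, and then $\max_\xi\langle \xi,v\rangle=\sigma(v)<0$, a contradiction.

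\emph{Passing to the limiting normal cone.} It remains to check $K^\circ=\Tangent{C}{\bar x}^\circ\subset \Normal{C}{\bar x}$. This is the standard fact that the polar of the contingent cone equals the Fréchet normal cone $\widehat N_C(\bar x)$ \cite{RockafellarWets1998}. Moreover $\widehat N_C(\bar x)\subset\Normal{C}{\bar x}$, by taking the constant sequences $x^k=\bar x$ and $\xi^k=\xi$ in the definition.

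I expect the only delicate points to be the limit argument in the primal step, which is where local Lipschitz continuity is used to replace $d_k$ by $d$, and the closedness of $\partialT f(\bar x)+K^\circ$ needed for separation. The latter holds because $\partialT f(\bar x)$ is compact. Convexity of $\Tangent{C}{\bar x}$ is essential for the bipolar identity $K^{\circ\circ}=K$ used in the dual step.
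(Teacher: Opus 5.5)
Your proof is correct and follows essentially the same route as the paper: first the primal condition $f'(\bar x;d)\ge 0$ for all $d\in\Tangent{C}{\bar x}$, then the convex optimality condition for minimizing $f'(\bar x;\cdot)$ over the convex cone $\Tangent{C}{\bar x}$ at $d=0$. Your version is more careful than the paper's at two points: you use the Lipschitz constant to justify replacing $d_k$ by $d$ in the difference quotients, and you reach the limiting normal cone only through the valid inclusion $\Tangent{C}{\bar x}^\circ=\widehat N_C(\bar x)\subset\Normal{C}{\bar x}$, whereas the paper asserts the equality $\Normal{\Tangent{C}{\bar x}}{0}=\Normal{C}{\bar x}$, which fails in general although only the inclusion is needed.
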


\begin{proof}
Since $\bar x$ is a local minimizer, for every $d\in \Tangent{C}{\bar x}$ there exist $t_k\downarrow 0$ and $d_k\to d$ such that $\bar x+t_k d_k\in C$ and
\[
f(\bar x)\le f(\bar x+t_k d_k)
\]
for all sufficiently large $k$. Hence
\[
\frac{f(\bar x+t_k d_k)-f(\bar x)}{t_k}\ge 0.
\]
Passing to the limit yields
\[
f'(\bar x;d)\ge 0,\qquad \forall d\in \Tangent{C}{\bar x}.
\]
Now consider the convex problem
\[
\min_{d\in \Tangent{C}{\bar x}} f'(\bar x;d).
\]
Since $f'(\bar x;\cdot)$ is finite-valued, convex and positively homogeneous, $d=0$ is a global minimizer. By the convex optimality condition,
\[
0\in \partial\bigl(f'(\bar x;\cdot)\bigr)(0)+\Normal{\Tangent{C}{\bar x}}{0}.
\]
Using $\partial\bigl(f'(\bar x;\cdot)\bigr)(0)=\partialT f(\bar x)$ and $\Normal{\Tangent{C}{\bar x}}{0}=\Normal{C}{\bar x}$, we obtain
\[
0\in \partialT f(\bar x)+\Normal{C}{\bar x}.
\]
\end{proof}

\subsection{Sequential Enhanced T-Fritz--John Type Condition}

\begin{definition}[Sequential enhanced T-Fritz--John type condition]
\label{def:tangential-akkt-fj}
A feasible point $x^*\in C$ is said to satisfy the \emph{sequential enhanced T-Fritz--John type condition} if there exist sequences
\[
x^k\in\cX,\qquad x^k\to x^*,
\]
multipliers
\[
\mu_0^k,\mu_1^k,\dots,\mu_q^k\ge 0,
\]
and vectors
\[
\xi_f^k\in\partialT f(x^k),\qquad
\xi_j^k\in\partialT g_j(x^k)\ (j=1,\dots,q),\qquad
\eta_k\in \Normal{\cX}{x^k},
\]
such that
\begin{align}
&\left\|\mu_0^k\xi_f^k+\sum_{j=1}^q\mu_j^k\xi_j^k+\eta_k\right\|\to 0,
\label{eq:takkt-fj-stat}\\
&\min\{\mu_j^k,-g_j(x^k)\}\to 0,\qquad j=1,\dots,q,
\label{eq:takkt-fj-comp}\\
&(\mu_0^k)^2+\sum_{j=1}^q(\mu_j^k)^2=1,\qquad \forall k.
\label{eq:takkt-fj-norm}
\end{align}
\end{definition}

\begin{theorem}[Sequential enhanced T-Fritz--John necessary condition]
\label{thm:enhanced-tangential-fj}
Let $x^*$ be a local minimizer of problem~\eqref{prob:P}. Assume that $f,g_j$ are locally Lipschitz and tangentially convex in a neighbourhood of $x^*$, and that $\Tangent{\cX}{x^*}$ is convex. Then $x^*$ satisfies the sequential enhanced T-Fritz--John type condition in the sense of Definition~\ref{def:tangential-akkt-fj}. Moreover, if
\[
J_*:=\{0:\ \limsup_{k\to\infty}\mu_0^k>0\}\cup\{j\in\{1,\dots,q\}:\ \limsup_{k\to\infty}\mu_j^k>0\}
\]
is nonempty, then, after passing to a subsequence if necessary, the following complementarity--violation property holds:
\begin{enumerate}
\item if $0\in J_*$, then $f(x^k)<f(x^*)$ for all $k$;
\item if $j\in J_*\cap\{1,\dots,q\}$, then $\mu_j^k g_j(x^k)>0$ for all $k$;
\item the functions $f$ (when $0\in J_*$) and $g_j$ (for all $j\in J_*\cap\{1,\dots,q\}$) can be chosen proximal subdifferentiable at the points $x^k$.
\end{enumerate}
\end{theorem}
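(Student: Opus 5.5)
The argument follows the classical penalty (quadratic penalization) approach of Bertsekas--Ozdaglar type enhanced Fritz--John conditions, adapted to the tangential setting via Lemmas~\ref{lem:square-positive-part} and~\ref{lem:tangential-local-opt}. Fix $\varepsilon>0$ so that $x^*$ minimizes $f$ on $C\cap \bar B(x^*,\varepsilon)$ and $f,g_j$ are locally Lipschitz and tangentially convex on this ball. For each $k$ consider the penalized problem
\[
\min_{x\in \cX\cap \bar B(x^*,\varepsilon)} F^k(x):=f(x)+\frac{k}{2}\sum_{j=1}^q\bigl(g_j^+(x)\bigr)^2+\frac12\|x-x^*\|^2 .
\]
By compactness a minimizer $x^k$ exists. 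The standard argument applies: $F^k(x^k)\le F^k(x^*)=f(x^*)$, so $k\sum_j(g_j^+(x^k))^2$ stays bounded. Hence every cluster point $\bar x$ is feasible, and $f(\bar x)+\frac12\|\bar x-x^*\|^2\le f(x^*)\le f(\bar x)$ forces $\bar x=x^*$. Thus $x^k\to x^*$ and $x^k$ lies in the interior of the ball for large $k$. Consequently $x^k$ is a local minimizer of $F^k$ over $\cX$.

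Next I apply Lemma~\ref{lem:tangential-local-opt} at $x^k$ with $C=\cX$. This requires $\Tangent{\cX}{x^k}$ to be convex, whereas the hypothesis only gives convexity at $x^*$. I expect this to be the main obstacle. My first attempt is to use instead the limiting-normal Fermat rule $0\in\partial F^k(x^k)+\Normal{\cX}{x^k}$ together with regularity of the pieces. Failing that, I would observe that the proof of Lemma~\ref{lem:tangential-local-opt} only uses $\partial(F^{k\prime}(x^k;\cdot))(0)+N_{\Tangent{\cX}{x^k}}(0)$, and that the polar of the tangent cone is contained in the limiting normal cone regardless of convexity when one argues through the Fréchet normal cone. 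Either route yields
\[
0\in \partialT F^k(x^k)+\Normal{\cX}{x^k}.
\]
Since $F^k$ is a sum of tangentially convex functions, its directional derivative is additive. Because $\partialT$ of a sum of tangentially convex functions equals the sum of the $\partialT$'s (support functions add, and the sets are compact convex), Lemma~\ref{lem:square-positive-part} gives
\[
0=\xi_f^k+\sum_j k\,g_j^+(x^k)\lambda_j^k\xi_j^k+(x^k-x^*)+\eta_k,
\]
with $\xi_f^k\in\partialT f(x^k)$, $\xi_j^k\in\partialT g_j(x^k)$, $\lambda_j^k\in[0,1]$, and $\eta_k\in\Normal{\cX}{x^k}$. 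On the set $\{g_j(x^k)>0\}$ case (ii) of Lemma~\ref{lem:subdiff-positive-part} gives $\lambda_j^k=1$.

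Set $\tau_j^k:=k g_j^+(x^k)$ and $\delta_k:=\bigl(1+\sum_j(\tau_j^k)^2\bigr)^{1/2}$, and normalize by $\mu_0^k:=1/\delta_k$ and $\mu_j^k:=\tau_j^k/\delta_k$. Then \eqref{eq:takkt-fj-norm} holds. Dividing the stationarity identity by $\delta_k\ge1$ gives \eqref{eq:takkt-fj-stat}, since $\|x^k-x^*\|/\delta_k\to0$ and $\eta_k/\delta_k$ still lies in the cone. For \eqref{eq:takkt-fj-comp}, note that if $g_j(x^k)\le0$ then $\mu_j^k=0$, and otherwise $-g_j(x^k)<0$ with $g_j(x^k)\to0$; in both cases $\min\{\mu_j^k,-g_j(x^k)\}\to0$.

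For the complementarity-violation part, pass to a subsequence along which $\mu_j^k\to\bar\mu_j$. If $j\ge1$ is in $J_*$, then $\mu_j^k>0$ eventually, so $g_j(x^k)>0$ and $\mu_j^kg_j(x^k)>0$. If some $j\ge1$ lies in $J_*$, then $x^k\notin C$; otherwise all $\tau_j^k=0$. In that infeasible case $F^k(x^k)\le f(x^*)$ with a strictly positive penalty or proximal term yields $f(x^k)<f(x^*)$. When $0\in J_*$ only, either $x^k\ne x^*$ gives strict inequality by the proximal term, or $x^k=x^*$; the latter is handled by perturbing, replacing $x^*$ by a nearby point, as in Bertsekas--Ozdaglar. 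This step needs care, and I would accept passing to the subsequence where $x^k\neq x^*$. Item 3 I would obtain by replacing the exact minimizer with a slight perturbation via a density argument: proximal subdifferentiability holds on a dense set for locally Lipschitz functions, so choose $x^k$ at such points, with the resulting errors absorbed into \eqref{eq:takkt-fj-stat}.
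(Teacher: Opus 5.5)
Your skeleton is the paper's: the same penalty $f+\frac k2\sum_j(g_j^+)^2+\frac12\|x-x^*\|^2$, the same normalization by $\delta_k$, and the same case check for $\min\{\mu_j^k,-g_j(x^k)\}\to0$. Those parts are fine. You are right that Lemma~\ref{lem:tangential-local-opt} needs convexity of $\Tangent{\cX}{x^k}$, which is not assumed; the paper applies the lemma at $x^k$ without comment. However, neither of your repairs works.

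\textbf{The Fermat rule at $x^k$.} Your first route gives limiting subgradients. Without regularity these need not be tangential ones: $x^2\sin(1/x)$ has $\partialT=\{0\}$ but limiting subdifferential $[-1,1]$ at $0$. Your second route misplaces the difficulty. The inclusion $\Tangent{\cX}{x}^\circ=\widehat N_{\cX}(x)\subset\Normal{\cX}{x}$ is true, but convexity is used in the step $0\in\partial\phi(0)+T^\circ$, where $\phi:=(F^k)'(x^k;\cdot)$ is sublinear and nonnegative on $T:=\Tangent{\cX}{x^k}$. That step fails for nonconvex $T$. Take $T$ the union of the coordinate axes of $\R^2$ and $\phi(d)=|d_1-d_2|-\tfrac12(d_1+d_2)$. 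Then $\phi\ge0$ on $T$ and $T^\circ=\{0\}$, yet $\phi(1,1)=-1$, so $0\notin\partial\phi(0)+T^\circ$.

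The missing idea is an $\epsilon$-perturbation. Since $\phi$ is Lipschitz and $\phi\ge0$ on $T$, a compactness argument on the unit directions $(x-x^k)/\|x-x^k\|$, $x\in\cX$, shows the following: for every $\epsilon>0$, $x^k$ is a local minimizer over $\cX$ of the convex function $x\mapsto\phi(x-x^k)+\epsilon\|x-x^k\|$. The limiting Fermat rule then gives $0\in\partialT F^k(x^k)+\epsilon\bar B+\Normal{\cX}{x^k}$, where $\bar B$ is the closed unit ball. Let $\epsilon\downarrow0$, using compactness of $\partialT F^k(x^k)$ and closedness of $\Normal{\cX}{x^k}$. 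This yields $0\in\partialT F^k(x^k)+\Normal{\cX}{x^k}$ with no convexity of any tangent cone, so the hypothesis on $\Tangent{\cX}{x^*}$ is not even needed.

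\textbf{The complementarity--violation property.} Your infeasibility argument correctly gives $f(x^k)<f(x^*)$ and $\mu_j^kg_j(x^k)>0$ whenever $J_*$ contains a constraint index. Here $J_*$ should be read along your extracted convergent subsequence, which is legitimate because the sequence is yours to choose. This is the Bertsekas--Ozdaglar situation, whose index set contains constraint indices only.

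When $J_*=\{0\}$, item~1 cannot be rescued by perturbation or by extracting terms with $x^k\neq x^*$. If $x^k$ is feasible, then $f(x^k)+\frac12\|x^k-x^*\|^2\le f(x^*)\le f(x^k)$ forces $x^k=x^*$. In fact the claim itself is false. Take $\cX=\R$, $f(x)=x^2$, $g_1\equiv-1$, $x^*=0$. Every admissible sequence has $\min\{\mu_1^k,1\}\to0$, hence $\mu_0^k\to1$ and $0\in J_*$, while $f(x^k)<0$ is impossible.

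Item~3 is not established either. Density of proximal subdifferentiability points for each function separately gives no common point for $f$ and all $g_j$, $j\in J_*$. Moreover, moving $x^k$ destroys the exact inclusion $0\in\partialT F^k(x^k)+\Normal{\cX}{x^k}$. Under the standing hypotheses, none of $\partialT f$, $\partialT g_j$, $\Normal{\cX}{\cdot}$ is outer semicontinuous, so nothing lets you absorb the resulting error into \eqref{eq:takkt-fj-stat}.

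The paper's proof dismisses the whole complementarity--violation property in one sentence, so these are defects of the statement as much as of your proposal. Item~1 should be restricted to the case $J_*\cap\{1,\dots,q\}\neq\emptyset$.
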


\begin{proof}
Choose $\varepsilon>0$ such that
\[
f(x^*)\le f(x),\qquad \forall x\in C\cap B(x^*,\varepsilon).
\]
For each $k\in\mathbb N$, consider the penalized problem
\[
\min_{x\in \cX\cap B(x^*,\varepsilon)}
P_k(x):=f(x)+\frac{k}{2}\sum_{j=1}^q\bigl(g_j^+(x)\bigr)^2+\frac12\|x-x^*\|^2.
\]
Let $x^k$ be a minimizer of $P_k$. Since $P_k(x^k)\le P_k(x^*)=f(x^*)$, we have
\begin{equation}\label{eq:penalty-basic}
f(x^k)+\frac{k}{2}\sum_{j=1}^q\bigl(g_j^+(x^k)\bigr)^2+\frac12\|x^k-x^*\|^2\le f(x^*).
\end{equation}
In particular, $g_j^+(x^k)\to 0$ for $j=1,\dots,q$. Every cluster point of $\{x^k\}$ is therefore feasible. Passing to the limit in \eqref{eq:penalty-basic} along a convergent subsequence and using the local minimality of $x^*$ yields $x^k\to x^*$.

For all sufficiently large $k$, the point $x^k$ belongs to the interior of $B(x^*,\varepsilon)$. Applying Lemma~\ref{lem:tangential-local-opt} to $P_k$ on the set $\cX\cap B(x^*,\varepsilon)$, we obtain
\[
0\in \partialT P_k(x^k)+\Normal{\cX}{x^k}.
\]
Using the sum rule, the smoothness of $\frac12\|x-x^*\|^2$, and Lemma~\ref{lem:square-positive-part}, we deduce
\[
\partialT P_k(x^k)
\subset
\partialT f(x^k)+\sum_{j=1}^q k g_j^+(x^k)\partialT(g_j^+)(x^k)+(x^k-x^*).
\]
By Lemma~\ref{lem:subdiff-positive-part}, for each $j$ there exist $\lambda_j^k\in[0,1]$ and $\xi_j^k\in\partialT g_j(x^k)$ such that
\[
k g_j^+(x^k)\partialT(g_j^+)(x^k)
\subset
k g_j^+(x^k)\lambda_j^k\partialT g_j(x^k).
\]
Hence there exist $\xi_f^k\in\partialT f(x^k)$ and $\eta_k\in\Normal{\cX}{x^k}$ such that
\[
0=\xi_f^k+\sum_{j=1}^q \zeta_j^k\xi_j^k+(x^k-x^*)+\eta_k,
\qquad \zeta_j^k:=k g_j^+(x^k)\lambda_j^k\ge 0.
\]
Set
\[
\delta^k:=\sqrt{1+\sum_{j=1}^q(\zeta_j^k)^2},
\qquad
\mu_0^k:=\frac{1}{\delta^k},
\qquad
\mu_j^k:=\frac{\zeta_j^k}{\delta^k},\quad j=1,\dots,q.
\]
Then
\[
(\mu_0^k)^2+\sum_{j=1}^q(\mu_j^k)^2=1,
\qquad
\mu_j^k\ge 0,
\]
and
\[
\mu_0^k\xi_f^k+\sum_{j=1}^q\mu_j^k\xi_j^k+\eta_k=-\frac{1}{\delta^k}(x^k-x^*).
\]
Since $x^k\to x^*$ and $\delta^k\ge 1$, we obtain \eqref{eq:takkt-fj-stat} and \eqref{eq:takkt-fj-norm}. The proof of \eqref{eq:takkt-fj-comp} is the same as in the standard penalty argument: if $g_j(x^k)\le 0$, then $\mu_j^k=0$; if $g_j(x^k)>0$, then $-g_j(x^k)\to 0$ since $g_j^+(x^k)\to 0$. The complementarity--violation property follows exactly as in the penalty construction.
\end{proof}

\subsection{ Enhanced T-Fritz--John and Enhanced T-KKT Systems}

\begin{definition}[Enhanced T-Fritz--John condition]
\label{def:enhanced-TFJ}
A feasible point $x^*\in C$ satisfies the \emph{enhanced T-Fritz--John condition} if there exist multipliers
\[
\mu_0,\mu_1,\dots,\mu_q\ge 0,
\qquad
(\mu_0,\mu_1,\dots,\mu_q)\neq 0,
\]
and vectors
\[
\xi_f\in\partialT f(x^*),\qquad
\xi_j\in\partialT g_j(x^*),\qquad
\eta\in\Normal{\cX}{x^*},
\]
such that
\begin{equation}\label{eq:FJ-stationary}
0=\mu_0\xi_f+\sum_{j=1}^q\mu_j\xi_j+\eta,
\end{equation}
and
\begin{equation}\label{eq:FJ-complementarity}
\mu_j g_j(x^*)=0,\qquad j=1,\dots,q.
\end{equation}
\end{definition}

\begin{definition}[Enhanced T-KKT condition]
\label{def:enhanced-TKKT}
A feasible point $x^*\in C$ satisfies the \emph{enhanced T-KKT condition} if there exist multipliers $\mu_j\ge 0$, $j=1,\dots,q$, and vectors
\[
\xi_f\in\partialT f(x^*),\qquad
\xi_j\in\partialT g_j(x^*),\qquad
\eta\in\Normal{\cX}{x^*},
\]
such that
\begin{equation}\label{eq:KKT-stationary}
0=\xi_f+\sum_{j=1}^q\mu_j\xi_j+\eta,
\end{equation}
and
\begin{equation}\label{eq:KKT-complementarity}
\mu_j g_j(x^*)=0,\qquad j=1,\dots,q.
\end{equation}
\end{definition}

\begin{theorem}[Static enhanced T-Fritz--John system]
\label{thm:static-fj}
Let $x^*$ be a local minimizer of \eqref{prob:P}. Assume that:
\begin{enumerate}[label=(\roman*)]
\item the assumptions of Theorem~\ref{thm:enhanced-tangential-fj} hold;
\item $f$ and each $g_j$ are Clarke regular at $x^*$;
\item the normal cone mapping $x\mapsto \Normal{\cX}{x}$ is outer semicontinuous at $x^*$.
\end{enumerate}
Then $x^*$ satisfies the enhanced T-Fritz--John condition.
\end{theorem}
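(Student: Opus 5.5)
We obtain the static system by passing to the limit in the sequential condition of Theorem~\ref{thm:enhanced-tangential-fj}. By hypothesis (i), that theorem gives sequences $x^k\in\cX$ with $x^k\to x^*$, multipliers $\mu^k=(\mu_0^k,\dots,\mu_q^k)\ge 0$ on the unit sphere, and vectors $\xi_f^k\in\partialT f(x^k)$, $\xi_j^k\in\partialT g_j(x^k)$, $\eta_k\in\Normal{\cX}{x^k}$ satisfying \eqref{eq:takkt-fj-stat}--\eqref{eq:takkt-fj-norm}. The unit sphere is compact, so after passing to a subsequence $\mu^k\to\mu=(\mu_0,\dots,\mu_q)$. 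The limit satisfies $\mu\ge 0$ and $\sum_j\mu_j^2=1$, so $\mu\neq 0$.

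Next we need the subgradients to converge. In the construction above, $\partialT f(x^k)\subset\partialC f(x^k)$ and likewise for each $g_j$, as noted in the proof of Lemma~\ref{lem:usc-tangential}. Since $f$ and the $g_j$ are locally Lipschitz near $x^*$, with a common Lipschitz constant $L$ on a neighbourhood, we get $\|\xi_f^k\|,\|\xi_j^k\|\le L$ for large $k$. Passing to a further subsequence, $\xi_f^k\to\xi_f$ and $\xi_j^k\to\xi_j$. Hypothesis (ii) lets us apply Lemma~\ref{lem:usc-tangential}, which gives $\xi_f\in\partialT f(x^*)$ and $\xi_j\in\partialT g_j(x^*)$.

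Then we handle $\eta_k$. By \eqref{eq:takkt-fj-stat},
\[
\eta_k=r_k-\mu_0^k\xi_f^k-\sum_{j=1}^q\mu_j^k\xi_j^k \quad\text{with } r_k\to 0,
\]
so $\{\eta_k\}$ is bounded and converges to $\eta:=-\mu_0\xi_f-\sum_j\mu_j\xi_j$. Hypothesis (iii), outer semicontinuity of $x\mapsto\Normal{\cX}{x}$ at $x^*$, together with $x^k\to x^*$ and $\eta_k\in\Normal{\cX}{x^k}$, yields $\eta\in\Normal{\cX}{x^*}$. This proves \eqref{eq:FJ-stationary}.

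Finally, complementarity \eqref{eq:FJ-complementarity} comes from \eqref{eq:takkt-fj-comp} and continuity of $g_j$. In the limit, $\min\{\mu_j,-g_j(x^*)\}=0$. Since $x^*$ is feasible, $g_j(x^*)\le 0$. If $g_j(x^*)<0$, the minimum forces $\mu_j=0$, so in every case $\mu_jg_j(x^*)=0$.

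The step most likely to cause trouble is the boundedness and closedness of the tangential subgradients. Boundedness needs the uniform Lipschitz bound, which is exactly why we route through the Clarke subdifferential. Closedness uses Clarke regularity only at $x^*$, not at the $x^k$. This is precisely what Lemma~\ref{lem:usc-tangential} delivers, so the argument goes through once we combine it with compactness of $\partialC$.
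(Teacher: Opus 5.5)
Your proposal is correct and follows the paper's proof essentially step by step. You normalize and extract convergent multipliers, bound the tangential subgradients via local Lipschitz continuity, close them using Lemma~\ref{lem:usc-tangential} and the outer semicontinuity of $x\mapsto\Normal{\cX}{x}$, and read off complementarity from \eqref{eq:takkt-fj-comp}; the only cosmetic difference is that you get convergence of $\eta_k$ directly from the stationarity residual rather than by a further subsequence extraction.
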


\begin{proof}
By Theorem~\ref{thm:enhanced-tangential-fj}, there exist sequences $x^k\to x^*$, multipliers $\mu_i^k\ge 0$, subgradients $\xi_f^k$, $\xi_j^k$, and normals $\eta_k$ satisfying \eqref{eq:takkt-fj-stat}--\eqref{eq:takkt-fj-norm}. Since the multipliers are normalized, by compactness of the unit sphere we may assume
\[
\mu_i^k\to \mu_i,\qquad i=0,1,\dots,q.
\]
Because $f$ and $g_j$ are locally Lipschitz, their tangential subdifferentials are locally bounded. Hence $\{\xi_f^k\}$ and $\{\xi_j^k\}$ are bounded. From \eqref{eq:takkt-fj-stat}, $\{\eta_k\}$ is also bounded. Passing to a subsequence, we may assume
\[
\xi_f^k\to \xi_f,\qquad \xi_j^k\to \xi_j,\qquad \eta_k\to \eta.
\]
By Lemma~\ref{lem:usc-tangential}, $\xi_f\in\partialT f(x^*)$ and $\xi_j\in\partialT g_j(x^*)$. By the outer semicontinuity of $x\mapsto \Normal{\cX}{x}$, we have $\eta\in\Normal{\cX}{x^*}$. Passing to the limit in \eqref{eq:takkt-fj-stat} yields
\[
0=\mu_0\xi_f+\sum_{j=1}^q\mu_j\xi_j+\eta.
\]
Since the multipliers are normalized, $(\mu_0,\mu_1,\dots,\mu_q)\neq 0$. It remains to prove complementarity. If $g_j(x^*)<0$, then by continuity $g_j(x^k)<-\varepsilon$ for all sufficiently large $k$, hence \eqref{eq:takkt-fj-comp} forces $\mu_j^k\to 0$, so $\mu_j=0$. Therefore $\mu_j g_j(x^*)=0$ for $j=1,\dots,q$.
\end{proof}

\begin{remark}
The previous theorem gives a static enhanced tangential Fritz--John multiplier system at the reference point. The next section shows how weak tangential constraint qualifications exclude abnormal multipliers and lead to an enhanced T-KKT system.
\end{remark}


\section{Tangential Constraint Qualifications and Their Relationships}

\noindent
In this section, we introduce several tangential constraint qualifications and clarify their roles in the derivation of tangential optimality conditions. These conditions extend classical constraint qualifications to the framework of directional derivatives and tangential subdifferentials. We then establish the main implication relationships among them and explain how they lead to enhanced optimality systems.

\subsection{Tangential Constraint Qualifications}

\noindent
We first present the main constraint qualifications used in this paper.

\begin{definition}[TMFCQ {\cite{Mangasarian1967}}]
\label{def:TMFCQ}
A feasible point $x^*\in \cX$ is said to satisfy the \emph{tangential Mangasarian--Fromovitz constraint qualification (TMFCQ)} if
\[
0\in \sum_{j\in J(x^*)}\mu_j\partialT g_j(x^*)+\Normal{\cX}{x^*},\qquad \mu_j\ge 0
\]
implies $\mu_j=0$ for all $j\in J(x^*)$, where
\[
J(x^*):=\{j\in\{1,\dots,q\}:g_j(x^*)=0\}.
\]
\end{definition}

\begin{definition}[T-quasinormality {\cite{andreani2010new}}]
\label{def:T-quasinormality}
A feasible point $x^*\in \cX$ is said to be \emph{T-quasinormal} if there is no nonzero multiplier vector $\mu\in\R_+^q$ such that
\[
0\in \sum_{j=1}^q \mu_j\partialT g_j(x^*)+\Normal{\cX}{x^*}
\]
and there exists an infeasible sequence $x^k\to x^*$ satisfying
\[
\mu_j g_j(x^k)>0,\qquad \forall j\in J:=\{j:\mu_j>0\}.
\]
\end{definition}

\begin{definition}[Tangential cone--continuity property (T--CCP) {\cite{andreani2010ccp}}]
\label{def:TCCP}
For each $x\in\cX$, define
\[
K_T(x):=
\left\{
\mu_0\xi_f+\sum_{j=1}^q \mu_j\xi_j+\eta
\;\middle|\;
\mu_0\ge 0,\ \mu_j\ge 0,\ 
\xi_f\in\partialT f(x),\ 
\xi_j\in\partialT g_j(x),\ 
\eta\in\Normal{\cX}{x}
\right\}.
\]
We say that $x^*$ satisfies the \emph{tangential cone--continuity property (T--CCP)} if
\[
\limsup_{x\to x^*} K_T(x)\subset K_T(x^*).
\]
\end{definition}

\begin{definition}[TACQ (Tangential Abadie constraint qualification) {\cite{Abadie1967}}]
\label{def:TACQ}
We say that the \emph{tangential Abadie constraint qualification (TACQ)} holds at $x^*\in \cX$ if
\[
L_T(x^*)\subset \Tangent{\cX}{x^*},
\]
where
\[
L_T(x^*)
:=
\Bigl\{d\in \Tangent{\cX}{x^*}:\ g_j'(x^*;d)\le 0,\ \forall j\in J(x^*)\Bigr\}.
\]
\end{definition}

\begin{remark}
TMFCQ and ACQ are classical constraint qualifications, while quasinormality and the cone--continuity property originate from sequential optimality theory. The above tangential versions are adapted to the present framework by replacing gradients with tangential subdifferentials or directional derivatives. In particular, TACQ can be viewed as the natural tangential counterpart of the classical Abadie constraint qualification.
\end{remark}

\subsection{Relationships among Constraint Qualifications}

\noindent
We now establish the implication relationships among the above constraint qualifications and explain how they are used to derive enhanced tangential optimality conditions.

\begin{theorem}\label{thm:mfcq-implies-qn}
If the tangential MFCQ holds at a feasible point $x^*\in \cX$, then $x^*$ is T-quasinormal.
\end{theorem}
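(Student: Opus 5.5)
We will argue by contradiction, following the classical proof that MFCQ implies quasinormality. Suppose $x^*$ satisfies TMFCQ but is not T-quasinormal. Then there is a nonzero $\mu\in\R_+^q$, vectors $\xi_j\in\partialT g_j(x^*)$ and $\eta\in\Normal{\cX}{x^*}$ with
\[
0=\sum_{j=1}^q\mu_j\xi_j+\eta,
\]
together with a sequence $x^k\to x^*$ such that $\mu_j g_j(x^k)>0$ for every $j\in J=\{j:\mu_j>0\}$. The goal is to show that this multiplier is in fact admissible in the TMFCQ inclusion, which then forces $\mu=0$.

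The key step is to prove $J\subset J(x^*)$. Let $j\in J$. Since $\mu_j>0$, we have $g_j(x^k)>0$ for all $k$. Because $g_j$ is locally Lipschitz, hence continuous, near $x^*$, letting $k\to\infty$ gives $g_j(x^*)\ge 0$. Feasibility of $x^*$ gives $g_j(x^*)\le 0$. Therefore $g_j(x^*)=0$, that is, $j\in J(x^*)$.

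Consequently, every index with $\mu_j>0$ is active, and the terms with $j\notin J(x^*)$ vanish. The relation above then reads
\[
0\in\sum_{j\in J(x^*)}\mu_j\partialT g_j(x^*)+\Normal{\cX}{x^*},\qquad \mu_j\ge 0.
\]
By TMFCQ, $\mu_j=0$ for all $j\in J(x^*)$. Since $\mu_j=0$ outside $J(x^*)$ as well, we get $\mu=0$, which contradicts $\mu\neq 0$.

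The only point that needs care is the continuity of $g_j$ used in the key step, and this follows from the standing local Lipschitz assumption of this section. The other ingredient is reading the definition of T-quasinormality so that the same $\xi_j$ and $\eta$ can be reused in the TMFCQ inclusion; this is immediate once the inactive terms are dropped. I do not expect a genuine obstacle: the sign condition along the infeasible sequence is only needed to push all positive multipliers onto the active set.
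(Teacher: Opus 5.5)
Your proposal is correct and follows essentially the same route as the paper. Both arguments go by contradiction, use continuity of $g_j$ together with the sign condition $\mu_j g_j(x^k)>0$ to place all positive multipliers on active constraints, and then invoke TMFCQ. The only cosmetic difference is that you prove $J\subset J(x^*)$ directly by passing to the limit in $g_j(x^k)>0$, whereas the paper proves the contrapositive: inactive indices have $g_j(x^k)<0$ eventually, so they must carry $\mu_j=0$.
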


\begin{proof}
Suppose, to the contrary, that $x^*$ is not T-quasinormal. Then there exists a nonzero multiplier vector $\mu\in\R_+^q$ such that
\[
0\in \sum_{j=1}^q \mu_j\partialT g_j(x^*)+\Normal{\cX}{x^*}
\]
and there exists an infeasible sequence $x^k\to x^*$ satisfying
\[
\mu_j g_j(x^k)>0,\qquad \forall j\in J:=\{j:\mu_j>0\}.
\]
We claim that $\mu_j=0$ for every inactive constraint index $j\notin J(x^*)$. Indeed, if $j\notin J(x^*)$, then $g_j(x^*)<0$. By continuity of $g_j$, one has $g_j(x^k)<0$ for all sufficiently large $k$. Since $\mu_j g_j(x^k)>0$ whenever $\mu_j>0$, this is impossible unless $\mu_j=0$.

Hence the above stationarity relation reduces to
\[
0\in \sum_{j\in J(x^*)}\mu_j\partialT g_j(x^*)+\Normal{\cX}{x^*},
\qquad
\mu_j\ge 0,
\]
with at least one $\mu_j>0$, because $\mu\neq 0$. This contradicts TMFCQ at $x^*$. Therefore $x^*$ is T-quasinormal.
\end{proof}

\begin{lemma}\label{lem:tccp-to-acq-aux}
Let $x^*\in \cX$. Assume that:
\begin{enumerate}[label=(\roman*)]
\item $x^*$ satisfies T--CCP;
\item there exists a neighbourhood $U$ of $x^*$ such that
\[
\widehat N_{\cX}(x)\subset K_T(x),\qquad \forall x\in \cX\cap U,
\]
where $\widehat N_{\cX}(x)$ denotes the Fr\'echet normal cone to $\cX$;
\item
\[
\Normal{\cX}{x^*}=\limsup_{x\to x^*,\,x\in \cX}\widehat N_{\cX}(x).
\]
\end{enumerate}
Then
\[
\Normal{\cX}{x^*}\subset K_T(x^*).
\]
\end{lemma}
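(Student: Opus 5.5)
The argument is a direct limiting one: hypothesis (iii) gives approximating sequences for every limiting normal, hypothesis (ii) places each approximating Fréchet normal inside the tangential cone $K_T$ at the corresponding point, and hypothesis (i), T--CCP, transfers the limit into $K_T(x^*)$.

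\emph{Step 1 (approximation).} Fix $\xi\in\Normal{\cX}{x^*}$. By hypothesis (iii) there exist sequences $x^k\in\cX$ and $\xi^k\in\widehat N_{\cX}(x^k)$ with $x^k\to x^*$ and $\xi^k\to\xi$. The definition of $\Normal{\cX}{x^*}$ recalled in Section~2 would already provide such sequences; (iii) simply makes the restriction $x^k\in\cX$ explicit, so that (ii) applies.

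\emph{Step 2 (membership in $K_T(x^k)$).} Since $x^k\to x^*$, we have $x^k\in\cX\cap U$ for all sufficiently large $k$. Hypothesis (ii) then gives $\xi^k\in\widehat N_{\cX}(x^k)\subset K_T(x^k)$. Concretely, each $\xi^k$ has the form
\[
\xi^k=\mu_0^k\xi_f^k+\sum_{j=1}^q\mu_j^k\xi_j^k+\eta_k .
\]
However, we never need to control the individual multipliers or subgradients. This matters because no normalization or boundedness of $\mu^k$ is available, and T--CCP is designed precisely to bypass that issue.

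\emph{Step 3 (passage to the limit).} By construction $\xi$ belongs to the Painlevé--Kuratowski outer limit $\limsup_{x\to x^*}K_T(x)$. I read this outer limit in the standard sense: the set of limits of $w^k\in K_T(x^k)$ with $x^k\to x^*$, where $x^k\in\cX$ because $K_T$ is defined only on $\cX$. Hypothesis (i) then yields $\xi\in K_T(x^*)$. Since $\xi$ was arbitrary, $\Normal{\cX}{x^*}\subset K_T(x^*)$.

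The only delicate point is in Step 3: checking that the sequence produced in Step 1 is admissible in the outer limit used to define T--CCP. This means confirming that points are taken in $\cX$ (or in $\cX\cap U$ for large $k$) and that the full sequence, not merely a subsequence, converges. Both hold here, since $x^k\in\cX$, $x^k\to x^*$ and $\xi^k\to\xi$ along the whole sequence. No compactness argument is needed, so the proof reduces to carefully matching definitions.
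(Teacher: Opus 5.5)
Your proposal is correct and follows the same route as the paper's proof: you approximate a limiting normal by Fr\'echet normals $\xi^k\in\widehat N_{\cX}(x^k)$ with $x^k\in\cX$, use (ii) to place them in $K_T(x^k)$, and then pass to the limit via T--CCP. As a side remark, $K_T(x^*)$ allows $\mu_0=\mu_1=\dots=\mu_q=0$, so the inclusion $\Normal{\cX}{x^*}\subset K_T(x^*)$ already follows from the definition whenever $\partialT f(x^*)$ and the $\partialT g_j(x^*)$ are nonempty (as they are under the standing tangential convexity assumption), which means neither your argument nor the paper's actually needs hypotheses (i)--(iii).
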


\begin{proof}
Let $\omega\in\Normal{\cX}{x^*}$. By the outer-limit representation of the limiting normal cone, there exist sequences $x^k\in \cX$ and $\omega_k\in\widehat N_{\cX}(x^k)$ such that
\[
x^k\to x^*,
\qquad
\omega_k\to \omega.
\]
By assumption (ii), for all sufficiently large $k$ one has
\[
\omega_k\in K_T(x^k).
\]
Since $x^*$ satisfies T--CCP, namely
\[
\limsup_{x\to x^*}K_T(x)\subset K_T(x^*),
\]
it follows from $x^k\to x^*$ and $\omega_k\to\omega$ that
\[
\omega\in K_T(x^*).
\]
Therefore
\[
\Normal{\cX}{x^*}\subset K_T(x^*),
\]
which completes the proof.
\end{proof}

\begin{theorem}\label{thm:tccp-implies-acq}
Assume the hypotheses of Lemma~\ref{lem:tccp-to-acq-aux}. Assume in addition that
\[
K_T(x^*)=L_T(x^*)^\circ,
\qquad
\Normal{\cX}{x^*}=\Tangent{\cX}{x^*}^\circ,
\]
and that $\Tangent{\cX}{x^*}$ is closed and convex. Then TACQ holds at $x^*$, i.e.,
\[
L_T(x^*)\subset \Tangent{\cX}{x^*}.
\]
\end{theorem}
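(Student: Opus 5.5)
The plan is a polar-duality argument. By Lemma~\ref{lem:tccp-to-acq-aux}, hypotheses (i)--(iii) give
\[
\Normal{\cX}{x^*}\subset K_T(x^*).
\]
Combining this with the two polar identities assumed in Theorem~\ref{thm:tccp-implies-acq}, we get
\[
\Tangent{\cX}{x^*}^\circ=\Normal{\cX}{x^*}\subset K_T(x^*)=L_T(x^*)^\circ .
\]

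Next I take polars once more. Polarity reverses inclusions, so
\[
L_T(x^*)^{\circ\circ}\subset \Tangent{\cX}{x^*}^{\circ\circ}.
\]
Since $\Tangent{\cX}{x^*}$ is assumed closed and convex (and is a cone), the bipolar theorem gives $\Tangent{\cX}{x^*}^{\circ\circ}=\Tangent{\cX}{x^*}$.

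On the left, it suffices to use the elementary inclusion $L_T(x^*)\subset L_T(x^*)^{\circ\circ}$, which holds for any set. We do not need $L_T(x^*)$ itself to be closed or convex. Chaining these gives $L_T(x^*)\subset \Tangent{\cX}{x^*}$, which is TACQ.

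I expect no real obstacle, since the substantive content already sits in Lemma~\ref{lem:tccp-to-acq-aux} and in the polar hypotheses. The one point to check is the direction of each inclusion and that the bipolar theorem is applied only to $\Tangent{\cX}{x^*}$, where closedness and convexity are assumed.

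I would also remark that the conclusion is essentially tautological. By definition $L_T(x^*)\subset\Tangent{\cX}{x^*}$ already holds, because $L_T$ is defined as a subset of the tangent cone. So the duality chain serves mainly to make the mechanism linking T--CCP to TACQ explicit.
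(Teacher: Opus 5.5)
Your argument is correct and is essentially the paper's proof. The paper argues pointwise: it separates a hypothetical $d\in L_T(x^*)\setminus\Tangent{\cX}{x^*}$ from the closed convex cone $\Tangent{\cX}{x^*}$ to obtain $\omega\in\Tangent{\cX}{x^*}^\circ=\Normal{\cX}{x^*}\subset K_T(x^*)=L_T(x^*)^\circ$ with $\langle\omega,d\rangle>0$, which is the same separation step your bipolar identity packages at the level of sets. Your closing remark is also right: $L_T(x^*)$ is defined as a subset of $\Tangent{\cX}{x^*}$, so the inclusion holds trivially under the paper's definition.
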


\begin{proof}
Take any $d\in L_T(x^*)$. Suppose, to the contrary, that
\[
d\notin \Tangent{\cX}{x^*}.
\]
Since $\Tangent{\cX}{x^*}$ is closed and convex, by the separation theorem there exists a vector
\[
\omega\in \Tangent{\cX}{x^*}^\circ=\Normal{\cX}{x^*}
\]
such that
\[
\langle \omega,d\rangle>0.
\]
By Lemma~\ref{lem:tccp-to-acq-aux}, we have
\[
\Normal{\cX}{x^*}\subset K_T(x^*).
\]
Hence $\omega\in K_T(x^*)$. Using the assumption
\[
K_T(x^*)=L_T(x^*)^\circ,
\]
we obtain
\[
\omega\in L_T(x^*)^\circ.
\]
Since $d\in L_T(x^*)$, it follows from the definition of the polar cone that
\[
\langle \omega,d\rangle\le 0,
\]
which contradicts $\langle \omega,d\rangle>0$. Therefore our assumption was false, and thus
\[
d\in \Tangent{\cX}{x^*}.
\]
Since $d\in L_T(x^*)$ was arbitrary, we conclude that
\[
L_T(x^*)\subset \Tangent{\cX}{x^*},
\]
that is, TACQ holds at $x^*$.
\end{proof}

\begin{theorem}[T--CCP implies $\mathrm{AKKT}\Rightarrow\mathrm{KKT}$]
\label{thm:tccp-akkt-kkt}
Let $x^*$ be a feasible point of problem~\eqref{prob:P}. Assume that:
\begin{enumerate}[label=(\roman*)]
\item $x^*$ satisfies the KKT-type tangential AKKT condition in the sense of Definition~\ref{def:enhanced-TKKT};
\item $x^*$ satisfies T--CCP in the sense of Definition~\ref{def:TCCP};
\item the multifunction $x\mapsto\partialT f(x)$ is outer semicontinuous at $x^*$.
\end{enumerate}
Then $x^*$ satisfies the enhanced tangential KKT condition.
\end{theorem}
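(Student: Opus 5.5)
If hypothesis (i) is read literally, as membership in Definition~\ref{def:enhanced-TKKT}, the conclusion holds trivially. I therefore read (i) in its intended sequential sense. There exist $x^k\to x^*$, $\mu_j^k\ge 0$, $\xi_f^k\in\partialT f(x^k)$, $\xi_j^k\in\partialT g_j(x^k)$ and $\eta_k\in\Normal{\cX}{x^k}$ such that
\[
\varepsilon^k:=\xi_f^k+\sum_{j=1}^q\mu_j^k\xi_j^k+\eta_k\to 0,
\qquad
\min\{\mu_j^k,-g_j(x^k)\}\to 0 .
\]
This is Definition~\ref{def:tangential-akkt-fj} with $\mu_0^k\equiv 1$ and the normalization dropped. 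The plan is to push the part of the sequence that does not involve $f$ through T--CCP, and then absorb the extra $f$-term that T--CCP may create by using the convexity of $\partialT f(x^*)$.

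\emph{Step 1 (limit of the objective part).} Since $f$ is locally Lipschitz, $\{\xi_f^k\}$ is bounded. Passing to a subsequence gives $\xi_f^k\to\xi_f$, and hypothesis (iii) yields $\xi_f\in\partialT f(x^*)$.

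\emph{Step 2 (limit of the constraint part).} Set $\omega^k:=\sum_j\mu_j^k\xi_j^k+\eta_k=\varepsilon^k-\xi_f^k$. Taking $\mu_0=0$ in the definition of $K_T(x^k)$ shows $\omega^k\in K_T(x^k)$. Moreover $\omega^k\to-\xi_f$, so T--CCP gives $-\xi_f\in K_T(x^*)$. Hence there are $\mu_0\ge0$, $\mu_j\ge0$, $\xi_f'\in\partialT f(x^*)$, $\xi_j\in\partialT g_j(x^*)$ and $\eta\in\Normal{\cX}{x^*}$ with
\[
0=\xi_f+\mu_0\xi_f'+\sum_j\mu_j\xi_j+\eta .
\]

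\emph{Step 3 (absorbing $\mu_0$).} Because $\partialT f(x^*)$ is convex, $\xi_f+\mu_0\xi_f'=(1+\mu_0)\bar\xi$ with
\[
\bar\xi:=\tfrac{1}{1+\mu_0}\xi_f+\tfrac{\mu_0}{1+\mu_0}\xi_f'\in\partialT f(x^*).
\]
Dividing by $1+\mu_0>0$ and using that $\Normal{\cX}{x^*}$ is a cone gives
\[
0=\bar\xi+\sum_j\tfrac{\mu_j}{1+\mu_0}\xi_j+\tfrac{1}{1+\mu_0}\eta .
\]
This is the stationarity relation \eqref{eq:KKT-stationary}.

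\emph{Step 4 (complementarity).} I expect this to be the main obstacle. $K_T(x^*)$ as defined allows every index $j$, so the multipliers produced by T--CCP need not vanish on inactive constraints. I would first clean the sequence. If $g_j(x^*)<0$, then $-g_j(x^k)$ stays bounded away from $0$, so $\min\{\mu_j^k,-g_j(x^k)\}\to0$ forces $\mu_j^k\to0$. Since $\xi_j^k$ is bounded, the term $\mu_j^k\xi_j^k$ can be moved into the error $\varepsilon^k$, and we may assume $\mu_j^k=0$ for $j\notin J(x^*)$. Then $\omega^k$ lies in the smaller cone obtained by restricting the sum in $K_T$ to $j\in J(x^*)$. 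I would apply T--CCP to this active-index cone, which is the natural reading of Definition~\ref{def:TCCP} at $x^*$ and the only place where the argument needs a slight strengthening of the stated hypothesis. With that cone, the limit multipliers satisfy $\mu_j=0$ for $j\notin J(x^*)$, which gives \eqref{eq:KKT-complementarity}.
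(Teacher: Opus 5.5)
Your argument follows the paper's route. Both read (i) sequentially with $\mu_0^k\equiv 1$ and kill the inactive multipliers through $\min\{\mu_j^k,-g_j(x^k)\}\to 0$ and the local boundedness of $\partialT g_j$. Both then get $\xi_f^k\to\xi_f^*\in\partialT f(x^*)$ from (iii) and apply T--CCP to $\omega_k\to-\xi_f^*$.

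The two points you treat explicitly are where the paper's proof is silent. When it unpacks $-\xi_f^*\in K_T(x^*)$, it writes $-\xi_f^*=\sum_{j\in J(x^*)}\mu_j^*\xi_j^*+\eta^*$. This drops both the term $\mu_0\xi_f'$ and the inactive indices that Definition~\ref{def:TCCP} permits. Your Step 3 legitimately removes the first, using convexity of $\partialT f(x^*)$ and the cone property of $\Normal{\cX}{x^*}$. Your Step 4 makes explicit the active-index version of T--CCP that the paper uses tacitly. That version is a replacement of the stated hypothesis rather than a consequence of it.

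The replacement is needed: under the literal Definition~\ref{def:TCCP}, the sequential reading of the theorem is false. Take $n=1$, $\cX=\R$, $f(x)=-x$, $g_1(x)=x^3$, $g_2(x)=x-1$ and $x^*=0$, which is a global minimizer on $C=(-\infty,0]$.
\begin{itemize}
\item The choice $x^k=1/k$, $\mu_1^k=k^2/3$, $\mu_2^k=0$ gives zero residual and $\min\{\mu_1^k,-g_1(x^k)\}=-k^{-3}\to 0$, so the sequential condition holds.
\item $\partialT f\equiv\{-1\}$ is outer semicontinuous.
\item $K_T(x)=\R$ for every $x$, since $-\mu_0$ and $+\mu_2$ point in opposite directions, so the literal T--CCP holds trivially.
\item Stationarity at $x^*$ reads $0=-1+\mu_2$, which is incompatible with $\mu_2 g_2(0)=-\mu_2=0$.
\end{itemize}
Your active-index version correctly fails here: that cone is $(-\infty,0]$ at $x^*$ but $\R$ at every $x\neq 0$.

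Your proposal is therefore a correct proof of the statement with the hypothesis repaired, and it is more careful than the paper's own proof.
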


\begin{proof}
Since $x^*$ satisfies the KKT-type tangential AKKT condition, there exist sequences
\[
x^k\to x^*,\qquad
\mu_j^k\ge 0,\qquad
\xi_f^k\in\partialT f(x^k),\qquad
\xi_j^k\in\partialT g_j(x^k),\qquad
\eta_k\in\Normal{\cX}{x^k},
\]
such that
\begin{equation}\label{eq:tccp-proof-1}
\left\|\xi_f^k+\sum_{j=1}^q\mu_j^k\xi_j^k+\eta_k\right\|\to 0,
\end{equation}
and
\begin{equation}\label{eq:tccp-proof-2}
\min\{\mu_j^k,-g_j(x^k)\}\to 0,\qquad j=1,\dots,q.
\end{equation}

Let
\[
J(x^*):=\{j\in\{1,\dots,q\}:\ g_j(x^*)=0\}.
\]
If $j\notin J(x^*)$, then $g_j(x^*)<0$. By continuity of $g_j$, there exists $\varepsilon_j>0$ such that
\[
g_j(x^k)\le -\varepsilon_j
\]
for all sufficiently large $k$. Hence \eqref{eq:tccp-proof-2} implies
\[
\mu_j^k\to 0.
\]
Since each $g_j$ is locally Lipschitz around $x^*$, the sets $\partialT g_j(x)$ are locally bounded near $x^*$. Therefore,
\[
\mu_j^k\xi_j^k\to 0,\qquad \forall j\notin J(x^*).
\]

Define
\[
\omega_k:=\sum_{j\in J(x^*)}\mu_j^k\xi_j^k+\eta_k.
\]
Then from \eqref{eq:tccp-proof-1} we obtain
\[
\xi_f^k+\omega_k\to 0.
\]
By construction, $\omega_k\in K_T(x^k)$ for all $k$.

Since $f$ is locally Lipschitz near $x^*$, the sequence $\{\xi_f^k\}$ is bounded. Passing to a subsequence if necessary, we may assume
\[
\xi_f^k\to \xi_f^*.
\]
By the outer semicontinuity of $x\mapsto\partialT f(x)$, it follows that
\[
\xi_f^*\in\partialT f(x^*).
\]
Hence
\[
\omega_k\to -\xi_f^*.
\]
Since $\omega_k\in K_T(x^k)$ and $x^k\to x^*$, T--CCP yields
\[
-\xi_f^*\in K_T(x^*).
\]
By the definition of $K_T(x^*)$, there exist multipliers $\mu_j^*\ge 0$, $j\in J(x^*)$, together with
\[
\xi_j^*\in\partialT g_j(x^*),\qquad \eta^*\in\Normal{\cX}{x^*},
\]
such that
\[
-\xi_f^*=\sum_{j\in J(x^*)}\mu_j^*\xi_j^*+\eta^*.
\]
Equivalently,
\[
0=\xi_f^*+\sum_{j\in J(x^*)}\mu_j^*\xi_j^*+\eta^*.
\]
Now set $\mu_j^*:=0$ for every $j\notin J(x^*)$. Then we obtain
\[
0=\xi_f^*+\sum_{j=1}^q\mu_j^*\xi_j^*+\eta^*.
\]
Finally, if $j\notin J(x^*)$, then $g_j(x^*)<0$ and $\mu_j^*=0$, so
\[
\mu_j^*g_j(x^*)=0.
\]
If $j\in J(x^*)$, then $g_j(x^*)=0$, so again
\[
\mu_j^*g_j(x^*)=0.
\]
Therefore $x^*$ satisfies the enhanced tangential KKT condition.
\end{proof}

\begin{theorem}[Enhanced tangential KKT under quasinormality]
\label{thm:enhanced-kkt}
Let $x^*$ be a local minimizer of \eqref{prob:P}. Assume the hypotheses of Theorem~\ref{thm:static-fj}, and assume in addition that $x^*$ is T-quasinormal. Then $x^*$ satisfies the enhanced tangential KKT condition.
\end{theorem}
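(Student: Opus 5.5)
The plan is to run the limiting argument of Theorem~\ref{thm:static-fj}, but keep the sequential information from Theorem~\ref{thm:enhanced-tangential-fj}, and then use T-quasinormality to show that the limiting multiplier of $f$ is positive.

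\textbf{Step 1: extract the sequences.} Theorem~\ref{thm:enhanced-tangential-fj} gives sequences $x^k\to x^*$, normalized multipliers $(\mu_0^k,\dots,\mu_q^k)$, and elements $\xi_f^k,\xi_j^k,\eta_k$. It also gives the complementarity--violation properties. These properties are exactly what the penalty construction in the proof of Theorem~\ref{thm:enhanced-tangential-fj} provides, and I will take them as they were used there. For $j\ge 1$, $\mu_j^k=\zeta_j^k/\delta^k$ with $\zeta_j^k=k g_j^+(x^k)\lambda_j^k$, so $\mu_j^k>0$ forces $g_j(x^k)>0$.

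\textbf{Step 2: pass to the limit.} Exactly as in the proof of Theorem~\ref{thm:static-fj}, I pass to a subsequence and use local boundedness, Lemma~\ref{lem:usc-tangential}, and outer semicontinuity of $\Normal{\cX}{\cdot}$. This gives limits $\mu_i$ with $\sum_i\mu_i^2=1$, and an identity
\[
0=\mu_0\xi_f+\sum_j\mu_j\xi_j+\eta ,
\]
with $\xi_f\in\partialT f(x^*)$, $\xi_j\in\partialT g_j(x^*)$, $\eta\in\Normal{\cX}{x^*}$, and $\mu_jg_j(x^*)=0$.

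\textbf{Step 3: rule out $\mu_0=0$.} This is the key step, and I argue by contradiction. Suppose $\mu_0=0$. Then $\mu=(\mu_1,\dots,\mu_q)\in\R^q_+$ is nonzero, and
\[
0\in\sum_j\mu_j\partialT g_j(x^*)+\Normal{\cX}{x^*}.
\]
Let $J=\{j:\mu_j>0\}$. For $j\in J$ we have $\mu_j^k\to\mu_j>0$, so $\mu_j^k>0$ for all large $k$. By Step 1 this gives $g_j(x^k)>0$, hence $\mu_jg_j(x^k)>0$ for all $j\in J$. In particular $x^k$ is infeasible, and $x^k\to x^*$. This is precisely the forbidden configuration in Definition~\ref{def:T-quasinormality}, which contradicts T-quasinormality. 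Therefore $\mu_0>0$.

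\textbf{Step 4: normalize.} Divide the Fritz--John identity by $\mu_0$ and set $\tilde\mu_j=\mu_j/\mu_0\ge 0$. This gives
\[
0=\xi_f+\sum_j\tilde\mu_j\xi_j+\tilde\eta ,
\]
with $\tilde\eta=\eta/\mu_0\in\Normal{\cX}{x^*}$, since the normal cone is a cone. Complementarity $\tilde\mu_j g_j(x^*)=0$ is inherited from $\mu_jg_j(x^*)=0$. So $x^*$ satisfies the enhanced T-KKT condition.

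\textbf{Main obstacle.} The delicate point is Step 3. The abnormal limit multiplier and the infeasible sequence must come from the \emph{same} subsequence. The sign information $\mu_j^kg_j(x^k)>0$ must hold for every $j$ in the support of the limit multiplier, not just for indices with $\limsup\mu_j^k>0$. Both requirements are met because the explicit penalty multipliers vanish whenever $g_j(x^k)\le 0$, and every further subsequence extraction in Step 2 preserves this property.
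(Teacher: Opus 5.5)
Your proposal is correct and follows essentially the same route as the paper. Both arguments obtain the Fritz--John multipliers from Theorem~\ref{thm:static-fj} along the penalty sequence of Theorem~\ref{thm:enhanced-tangential-fj}, rule out $\mu_0=0$ by producing an infeasible sequence with $\mu_j g_j(x^k)>0$ for all $j\in J$ that contradicts T-quasinormality, and then divide by $\mu_0$. Your Step~3 is slightly more explicit than the paper, which only invokes the complementarity--violation property: you derive $g_j(x^k)>0$ on the same subsequence directly from $\mu_j^k\to\mu_j>0$ and the fact that the penalty multipliers $\zeta_j^k=k\,g_j^+(x^k)\lambda_j^k$ vanish whenever $g_j(x^k)\le 0$.
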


\begin{proof}
By Theorem~\ref{thm:static-fj}, there exist multipliers
\[
\mu_0,\mu_1,\dots,\mu_q\ge 0,
\]
not all zero, and vectors
\[
\xi_f\in\partialT f(x^*),\qquad
\xi_j\in\partialT g_j(x^*),\qquad
\eta\in\Normal{\cX}{x^*}
\]
satisfying
\begin{equation}\label{eq:qn-proof-fj}
0=\mu_0\xi_f+\sum_{j=1}^q\mu_j\xi_j+\eta,
\end{equation}
together with
\begin{equation}\label{eq:qn-proof-comp}
\mu_j g_j(x^*)=0,\qquad j=1,\dots,q.
\end{equation}

We claim that $\mu_0>0$. Suppose, to the contrary, that $\mu_0=0$. Since the multiplier vector is nonzero, there exists some $j$ such that $\mu_j>0$. Let
\[
J:=\{j\in\{1,\dots,q\}:\ \mu_j>0\}.
\]
In the proof of Theorem~\ref{thm:enhanced-tangential-fj}, after extraction of the convergent subsequence leading to $(\mu_0,\mu)$, the complementarity--violation property yields an infeasible sequence $x^k\to x^*$ such that
\[
\mu_j g_j(x^k)>0,\qquad \forall j\in J,\ \forall k,
\]
and each $g_j$, $j\in J$, is proximal subdifferentiable at $x^k$. On the other hand, \eqref{eq:qn-proof-fj} becomes
\[
0\in \sum_{j=1}^q\mu_j\partialT g_j(x^*)+\Normal{\cX}{x^*},
\]
which contradicts T-quasinormality. Therefore $\mu_0>0$.

Dividing \eqref{eq:qn-proof-fj} by $\mu_0$ and setting
\[
\lambda_j:=\frac{\mu_j}{\mu_0},\qquad j=1,\dots,q,
\]
we obtain
\[
0=\xi_f+\sum_{j=1}^q\lambda_j\xi_j+\frac{1}{\mu_0}\eta.
\]
Since $\mu_0>0$, one has $\lambda_j\ge 0$ for all $j$. Moreover, by \eqref{eq:qn-proof-comp},
\[
\lambda_j g_j(x^*)=0,\qquad j=1,\dots,q.
\]
Thus the enhanced tangential KKT condition holds at $x^*$.
\end{proof}

\section{Application to Bilevel Programming }

\noindent
This section applies the tangential optimality theory developed in Sections~3 and~4 to the bilevel programming problem introduced in Section~1. We only recall the model from the introduction and directly derive the tangential optimality systems for its value-function reformulation.

Recall the bilevel problem \eqref{BP}--\eqref{LLP} introduced in Section~1. Let
\[
V(x):=\inf\{f(x,y):\ g_i(x,y)\le 0,\ i=1,\dots,q\}
\]
be the lower-level value function. Then the optimistic bilevel problem can be rewritten in the value-function form
\begin{equation}\label{eq:bilevel-vf}
\begin{aligned}
\min_{x,y}\quad & F(x,y)\\
\text{s.t.}\quad & H(x,y):=f(x,y)-V(x)\le 0,\\
& g_i(x,y)\le 0,\qquad i=1,\dots,q,\\
& G_j(x,y)\le 0,\qquad j=1,\dots,p.
\end{aligned}
\end{equation}
Denote by $\Xi$ the feasible set of \eqref{eq:bilevel-vf}. Throughout this section, we assume that $F,H,g_i,G_j$ are locally Lipschitz and tangentially convex around the reference point.



\begin{theorem}[Enhanced T-Fritz--John condition for bilevel programs]
\label{thm:bilevel-fj}
Let $(\bar x,\bar y)$ be a local minimizer of \eqref{BP}--\eqref{LLP}. 
Assume that $\Tangent{\Xi}{(\bar x,\bar y)}$ is convex and that 
$F,H,g_i,G_j$ are locally Lipschitz and tangentially convex in a neighbourhood of $(\bar x,\bar y)$.

In addition, assume that the value function
\[
V(x):=\inf\{f(x,y):\ g(x,y)\le 0\}
\]
is locally Lipschitz and tangentially convex around $\bar x$.

Then the value-function reformulation \eqref{eq:bilevel-vf} satisfies the sequential enhanced T-Fritz--John type condition at $(\bar x,\bar y)$.
\end{theorem}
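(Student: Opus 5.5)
The natural plan is to reduce the statement to Theorem~\ref{thm:enhanced-tangential-fj} by casting \eqref{eq:bilevel-vf} as an instance of problem~\eqref{prob:P}. I would take the variable $z=(x,y)\in\R^{n}\times\R^{m}$, the objective $F$, and the inequality constraints $H\le 0$, $g_i\le 0$ ($i=1,\dots,q$), $G_j\le 0$ ($j=1,\dots,p$), giving $1+q+p$ constraints in total. The abstract set is $\cX:=\R^{n}\times\R^{m}$. With this choice $\Tangent{\cX}{(\bar x,\bar y)}$ is the whole space, hence convex, and $\Normal{\cX}{z}=\{0\}$ for every $z$. The set $\Xi$ coincides with the feasible set $C$ of this instance of \eqref{prob:P}.

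The first step is to check that $(\bar x,\bar y)$ is a local minimizer of \eqref{eq:bilevel-vf}. This uses the standard equivalence of the optimistic bilevel problem with its value-function reformulation. Note that $y\in S(x)$ holds iff $g(x,y)\le 0$ and $f(x,y)\le V(x)$, because $f(x,y)\ge V(x)$ holds automatically for lower-level feasible $y$. Hence the feasible sets of \eqref{BP}--\eqref{LLP} and \eqref{eq:bilevel-vf} coincide, and local minimality transfers directly.

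The second step is to verify the regularity hypotheses of Theorem~\ref{thm:enhanced-tangential-fj}. For $F,g_i,G_j$ these are assumed. For $H(x,y)=f(x,y)-V(x)$ they are also assumed in the statement, but I would note that they follow from the assumption on $V$ whenever $f$ is locally Lipschitz and tangentially convex. Indeed, $(x,y)\mapsto V(x)$ is tangentially convex with directional derivative $V'(\bar x;d_x)$, which is convex in $(d_x,d_y)$. Since $-V$ has directional derivative $-V'(\bar x;\cdot)$, which is concave rather than convex, I would rely on the stated assumption on $H$ directly rather than on a sum rule. This point is the main subtlety. The assumption on $V$ only guarantees that $H'$ exists and is finite; the convexity of $H'$ must come from the hypothesis on $H$.

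With these verifications in place, Theorem~\ref{thm:enhanced-tangential-fj} applies with $\cX=\R^{n+m}$. It yields sequences $z^k=(x^k,y^k)\to(\bar x,\bar y)$, nonnegative multipliers $(\mu_0^k,\mu_H^k,\mu_{g,i}^k,\mu_{G,j}^k)$ of unit norm, and tangential subgradients of $F,H,g_i,G_j$ at $z^k$. Since $\eta_k=0$, these satisfy \eqref{eq:takkt-fj-stat}--\eqref{eq:takkt-fj-norm}, which is exactly the sequential enhanced T-Fritz--John type condition for \eqref{eq:bilevel-vf}. The complementarity--violation properties are inherited as well.

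The hypothesis that $\Tangent{\Xi}{(\bar x,\bar y)}$ is convex is then not needed. I would mention that it becomes relevant only if one keeps some constraints inside $\cX$, for example $\cX=\Xi$ with no explicit inequalities. The one genuine obstacle I expect is that $H$ never satisfies TMFCQ-type conditions. That failure is irrelevant for this sequential Fritz--John statement, because no constraint qualification is required.
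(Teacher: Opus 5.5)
Your proposal is correct and matches the paper's proof. That proof consists only of viewing \eqref{eq:bilevel-vf} as an instance of \eqref{prob:P} with $z=(x,y)$, objective $F$ and constraints $H,g_i,G_j$, and then invoking Theorem~\ref{thm:enhanced-tangential-fj}. Your additional checks are sound and make the paper's argument more precise. Taking $\cX=\R^{n}\times\R^{m}$ means the convexity of $\Tangent{\Xi}{(\bar x,\bar y)}$ is never used. Local minimality transfers because the feasible sets of \eqref{BP}--\eqref{LLP} and \eqref{eq:bilevel-vf} coincide. The convexity of $H'(\bar x,\bar y;\cdot)$ has to come from the direct hypothesis on $H$, not from the one on $V$ that the paper's proof emphasizes, since $-V'(\bar x;\cdot)$ is concave; your opening half-sentence saying the hypotheses on $H$ follow from those on $V$ should therefore be deleted, as you effectively retract it yourself.
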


\begin{proof}
Problem \eqref{eq:bilevel-vf} is an instance of the general constrained problem \eqref{prob:P} with decision variable $z:=(x,y)$, objective $F$, and inequality constraints $H$, $g_i$, and $G_j$. 

Under the stated assumptions, in particular the tangential convexity and local Lipschitz continuity of the value function $V(x)$, all constraint functions in \eqref{eq:bilevel-vf} satisfy the hypotheses of Theorem~\ref{thm:enhanced-tangential-fj}. 

Therefore, the conclusion follows directly from Theorem~\ref{thm:enhanced-tangential-fj}.
\end{proof}

\begin{remark}
The local Lipschitz continuity of the value function $V(x)$ can be ensured under standard regularity conditions of the lower-level problem; see  \cite{YeZhu1995}\cite{Dempe2002}.
Moreover, it follows from the results of Mashkoorzadeh et al.\cite{Mashkoorzadeh2021} and Gadhi and Ohda\cite{GadhiOhda2024} that $V(x)$ is tangentially convex when the lower-level feasible set is compact and independent of $x$, and the function $f(x,y)$ is uniformly Lipschitz and tangentially convex with respect to $x$.
\end{remark}



\begin{definition}[Bilevel T-quasinormality]
\label{def:bilevel-T-quasinormality}
A feasible point $(\bar x,\bar y)\in \Xi$ is said to be \emph{bilevel T-quasinormal} if the value-function reformulation \eqref{eq:bilevel-vf} is T-quasinormal at $(\bar x,\bar y)$ in the sense of Definition~\ref{def:T-quasinormality}.
\end{definition}

\begin{theorem}[Enhanced T-KKT condition for bilevel programs]
\label{thm:bilevel-kkt}
Let $(\bar x,\bar y)$ be a local minimizer of \eqref{BP}--\eqref{LLP}. Assume that:
\begin{enumerate}[label=(\roman*)]
\item the assumptions of Theorem~\ref{thm:bilevel-fj} hold;
\item $F,H,g_i,G_j$ are Clarke regular at $(\bar x,\bar y)$;
\item the normal cone mapping is outer semicontinuous at $(\bar x,\bar y)$;
\item $(\bar x,\bar y)$ is bilevel T-quasinormal.
\end{enumerate}
Then the value-function reformulation \eqref{eq:bilevel-vf} satisfies the enhanced T-KKT condition at $(\bar x,\bar y)$.
\end{theorem}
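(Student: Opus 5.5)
The plan is to treat this as a direct specialization of Theorem~\ref{thm:enhanced-kkt} to the value-function reformulation \eqref{eq:bilevel-vf}, in the same way that Theorem~\ref{thm:bilevel-fj} specialized Theorem~\ref{thm:enhanced-tangential-fj}. With $z:=(x,y)$ we view \eqref{eq:bilevel-vf} as an instance of \eqref{prob:P}: objective $F$, inequality constraints $H$, $g_i$ ($i=1,\dots,q$), $G_j$ ($j=1,\dots,p$), and $\cX$ either the whole space $\R^n\times\R^m$ or whatever closed abstract set is implicit in the model. Since $(\bar x,\bar y)$ is a local minimizer of \eqref{BP}--\eqref{LLP}, and the optimistic bilevel problem is equivalent to \eqref{eq:bilevel-vf} (feasible points coincide because $y\in S(x)$ iff $g(x,y)\le0$ and $f(x,y)\le V(x)$), the point $(\bar x,\bar y)$ is a local minimizer of \eqref{eq:bilevel-vf}.

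Next I will check, one by one, the hypotheses of Theorem~\ref{thm:enhanced-kkt}, which are those of Theorem~\ref{thm:static-fj} plus T-quasinormality. First, the hypotheses of Theorem~\ref{thm:enhanced-tangential-fj}: local Lipschitz continuity and tangential convexity of $F,g_i,G_j$ are assumed. For $H=f-V$ they follow from assumption (i) via Theorem~\ref{thm:bilevel-fj}, which assumes these properties of $V$; the sum rule for directional derivatives gives $H'(z;v)=f'(z;v)-V'(x;v_x)$. Convexity of the relevant tangent cone is also supplied by (i). Second, Clarke regularity of the data at $(\bar x,\bar y)$ is exactly assumption (ii), and outer semicontinuity of the normal cone mapping is assumption (iii). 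Third, by Definition~\ref{def:bilevel-T-quasinormality}, assumption (iv) says precisely that \eqref{eq:bilevel-vf} is T-quasinormal at $(\bar x,\bar y)$ in the sense of Definition~\ref{def:T-quasinormality}. Invoking Theorem~\ref{thm:enhanced-kkt} then yields multipliers $\lambda_H,\lambda_i,\lambda_j\ge0$ and tangential subgradients of $F,H,g_i,G_j$ at $(\bar x,\bar y)$, together with a normal vector, satisfying \eqref{eq:KKT-stationary}--\eqref{eq:KKT-complementarity}. This is the enhanced T-KKT condition for \eqref{eq:bilevel-vf}.

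The main obstacle is the bookkeeping between the cone assumptions. Theorem~\ref{thm:bilevel-fj} assumes convexity of $\Tangent{\Xi}{(\bar x,\bar y)}$, while Theorem~\ref{thm:enhanced-tangential-fj} needs convexity of $\Tangent{\cX}{(\bar x,\bar y)}$. I would resolve this by taking $\cX=\R^n\times\R^m$, so that this tangent cone is the whole space and the normal cone is $\{0\}$, which trivially satisfies (iii). If instead an abstract set is retained, I would interpret assumption (i) as providing the needed convexity for $\cX$.

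A secondary subtlety is the role of $H$. It is always active, since $H\le 0$ on $\Xi$ and $H(\bar x,\bar y)=0$. This is why MFCQ fails, but it is harmless here because T-quasinormality is assumed directly rather than derived. The argument is therefore a straightforward verification followed by an application of Theorem~\ref{thm:enhanced-kkt}.
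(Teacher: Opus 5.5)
Your proposal is correct and follows the paper's proof, which likewise just applies Theorems~\ref{thm:static-fj} and~\ref{thm:enhanced-kkt} to the reformulation~\eqref{eq:bilevel-vf}; your explicit choice $\cX=\R^n\times\R^m$ is a reasonable way to handle the tangent-cone bookkeeping that the paper glosses over. One small correction: take the tangential convexity of $H$ directly from the hypotheses of Theorem~\ref{thm:bilevel-fj}, which assume it, rather than deriving it from that of $V$, because $v\mapsto -V'(\bar x;v_x)$ is concave and so $f-V$ need not be tangentially convex even when $f$ and $V$ are.
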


\begin{proof}
Applying Theorems~\ref{thm:static-fj} and \ref{thm:enhanced-kkt} to the single-level reformulation \eqref{eq:bilevel-vf}, we obtain the desired enhanced T-KKT system.
\end{proof}

\section*{Declarations}

\noindent\textbf{Funding.}
No funds, grants, or other support was received.

\smallskip

\noindent\textbf{Competing Interests.}
The authors declare no competing interests.

\smallskip

\noindent\textbf{Data Availability.}
No datasets were generated or analyzed during the current study.

\bibliography{refs}

\end{document}